\documentclass[11pt]{amsart}
\usepackage{mathrsfs}
\usepackage{amssymb}
\usepackage{array}
\usepackage{color}
\usepackage{amsthm}
\usepackage{amsmath}
\usepackage{graphicx}
\usepackage{cases}
\usepackage[pagewise]{lineno}
\usepackage[square, comma, sort&compress, numbers]{natbib}
\numberwithin{equation}{section}

\newtheorem{thm}{Theorem}[section]
\newtheorem{cor}[thm]{Corollary}
\newtheorem{lem}[thm]{Lemma}
\newtheorem{prop}[thm]{Proposition}
\theoremstyle{definition}
\newtheorem{defn}{Definition}[section]
\newtheorem{rem}{Remark}[section]

\usepackage{hyperref}

\begin{document}
	\title[Second boundary value Lagrangian mean curvature type equation]{On the second boundary value problem for Lagrangian mean curvature type equation }

	 \author[J.-G.~Bao]{Jiguang Bao}
	 \thanks{The first author is supported by the National Natural Science Foundation of China (12371200) and Beijing Natural
	 	Science Foundation (1254049).}
	\author[Q.-F.~Jiang]{Qinfeng Jiang*}
	\thanks{*Corresponding author.}

	\subjclass[2020]{Primary 35K20; Secondary 35J25}
	\keywords{ Lagrangian mean curvature type equation, second boundary value problem, uniformly parabolic method,	existence and uniqueness}
	\maketitle
	
	\begin{center}
		\normalsize
		School of Mathematical Sciences, Beijing Normal University,\\
		Beijing, 100875, China\\[0.3em]
		\texttt{jgbao@bnu.edu.cn}\\
		\texttt{202531130031@mail.bnu.edu.cn}
	\end{center}

		\begin{abstract}
		This article is concerned with the second boundary value problem of the Lagrangian mean curvature type
		equation arising from special Lagrangian geometry. By the parabolic method, we consider a fully nonlinear parabolic equation with oblique  derivative boundary condition, and show the long time existence and convergence of the flow. It follows that the existence and uniqueness of the smooth uniformly convex solution are obtained, which generalizes the Brendle--Warren's theorem about minimal Lagrangian diffeomorphism in Euclidean metric space.		
	\end{abstract}

\section{Introduction}
\label{Section1}
The main aim of this article is to study the existence and uniqueness of the smooth uniformly
convex solution for the second boundary value problem of the Lagrangian mean curvature type
equation
\begin{numcases}{} 
	\sum_{i=1}^{n}\arctan \left(\lambda_{i}(D^2u)\right)=\iota \cdot Du+\kappa\cdot x+c, \quad x\in\Omega,\label{EQE}\\ 
	Du(\Omega)=\tilde{\Omega},\label{DuO}
\end{numcases}
where $\iota, \kappa$ are two known vectors, $c$ is a constant to be determined, $Du$ and $D^2u$ are the gradient and the Hessian matrix of the function $u$, respectively, $\Omega$ and $\tilde{\Omega}$ are two uniformly convex bounded domains with smooth boundary in $\mathbb{R}^n$ and $\lambda(D^2u)=(\lambda_1,\cdots,\lambda_n)$ are the eigenvalues of $D^2u.$ 

\eqref{EQE} is a variant of Lagrangian mean curvature equation. Under the framework of calibrated geometry, the special Lagrangian equation 
\begin{equation}\label{Fc}
	\sum_{i=1}^{n}\arctan \left(\lambda_{i}(D^2u)\right)=c,
\end{equation}
was first introduced by Harvey--Lawson in \cite{Harvey1982CalibratedG} back in 1982. Its solutions $u$ were shown to have the property that the gradient graph $(x, Du(x))$
in Euclidean space is a Lagrangian submanifold which is absolutely volume--minimizing, and the linearization at any solution is elliptic. 
Several methods for studying the Bernstein type
theorems occured in the literature \cite{MR1938816,MR1930884}, the result is the entire smooth convex solutions of \eqref{Fc} must be a quadratic polynomial. Jost--Xin \cite{MR1938816} obtained it by using harmonic maps into convex subsets of Grassmannians and Yuan's method \cite{MR1930884}  
based on the geometric measure theory.  Chen--Warren--Yuan \cite{Chen2009} established interior regularity,  and showd convex viscosity solutions of \eqref{Fc} are smooth in the interior of the domain when $|c|\geq \frac{(n-1)\pi}{2}$. Moreover, all convex viscosity solutions to \eqref{Fc} are real analytic \cite{Chen2023}.

Over the past few years, significant progress has been made in proving the existence of minimal Lagrangian graphs (where \(\iota, \kappa \equiv 0\) in \eqref{EQE}--\eqref{DuO}), with \(Du\) being a diffeomorphism from \(\Omega\) to \(\tilde{\Omega}\). That is,
\begin{equation}\label{Ftauc}
	\left.\left\{\begin{array}{l}
		\sum\limits_{i=1}^{n}\arctan \left(\lambda_{i}(D^2u)\right)=c,\quad x\in\Omega,\\Du(\Omega)=\tilde{\Omega}.
	\end{array}\right.
	\right.
\end{equation}
Brendle--Warren \cite{Brendle2008ABV} proved the existence and uniqueness of the
solution by the elliptic method, and Huang \cite{Huang2014OnTS} obtained the existence
of solution by considering the second boundary value problem for Lagrangian mean
curvature flow.  Under a certain bound on the Lipschitz norm of an initial
entire Lagrangian graph, Chau--Chen--He \cite{Chau2012} obtained the  Lagrangian mean curvature flow of entire Lagrangian graphs has a smooth longtime solution.

To deal with the second boundary value problem \eqref{EQE}-\eqref{DuO}, the details without the item $Du$ in \eqref{EQE} can be seen
in \cite{Wang2023} and the corresponding parabolic version results in \cite{Wang2024}. That is, for the second boundary value problem of the
Lagrangian mean curvature equation
\begin{equation}\label{preeq}
	\begin{cases} 
		\sum\limits_{i=1}^{n}\arctan \left(\lambda_{i}(D^2u)\right)=\kappa\cdot x+c,\quad  x\in\Omega,
		\\ Du(\Omega)=\tilde{\Omega},
	\end{cases}
\end{equation}
its Lagrangian graph $(x, Du(x))$ prescribed
constant mean curvature vector $H = (0,\kappa)^{\perp}$ in Euclidean space such that $Du$ is the diffeomorphism
between two uniformly convex bounded domains.

Bhattacharya--Shankar \cite{bhattacharya2024optimalregularitylagrangianmean} found if $u$ solves \eqref{EQE}, then $X(x,t) = (x,Du(x))+t(-\iota,\kappa)$ is a translator solution of $(X_t)^\perp=\Delta_gX$, where 
\[\Delta_{g}=\sum_{i,j=1}^{n}\frac{1}{\sqrt{\det g}}\partial_{i}\left(\sqrt{\det g}g^{ij}\partial_{j}\right) \]
is the Laplace--Beltrami operator of the induced metric $g=(g_{ij})=\left(I+D^{2}uD^{2}u\right)$ with $(g^{ij})=(g_{ij})^{-1}$, $\perp$ is the normal projection and  $X(x,t):\mathbb{R}^n\times\mathbb{R}\to\mathbb{C}^n$ are a family of Lagrangian submanifolds evolves by Lagrangian mean curvature flow with constant mean curvature $H = (-\iota,\kappa)^{\perp}$.  They also obtained the regularity for convex viscosity solutions to \eqref{EQE} in \cite{Bhattacharya2023,bhattacharya2024optimalregularitylagrangianmean}.

Inspired by \cite{Wang2023}, we get the existence and uniqueness of the smooth uniformly convex solution to \eqref{EQE}-\eqref{DuO} as follows
\begin{thm}\label{prethm}
	There exists some positive constant $\epsilon_{0}$ depending only
	on $\Omega$ and $\tilde{\Omega}$, such that if $|\iota|\leq \epsilon_{0},|\kappa|\leq \epsilon_{0}$, then there exist a uniformly convex solution
	$u\in C^\infty(\overline{\Omega})$ and a unique constant $c$ solving \eqref{EQE}-\eqref{DuO}, and $u$ is unique up to a constant.
\end{thm}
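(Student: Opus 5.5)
We will prove Theorem \ref{prethm} by the parabolic method, following \cite{Wang2023,Wang2024,Huang2014OnTS}. Let $F(D^2u)=\sum_{i}\arctan\lambda_i(D^2u)$. Choose a defining function $h$ of $\tilde\Omega$, uniformly concave, with $\tilde\Omega=\{h>0\}$ and $|Dh|=1$ on $\partial\tilde\Omega$. The condition \eqref{DuO} for convex $u$ is then equivalent to $h(Du)=0$ on $\partial\Omega$. We study the flow
\begin{equation*}
\begin{cases}
u_t=F(D^2u)-\iota\cdot Du-\kappa\cdot x, & (x,t)\in\Omega\times(0,\infty),\\
h(Du)=0, & (x,t)\in\partial\Omega\times(0,\infty),\\
u(\cdot,0)=u_0, & x\in\Omega.
\end{cases}
\end{equation*}
The initial datum $u_0$ is a smooth uniformly convex function with $Du_0(\Omega)=\tilde\Omega$; by Caffarelli's and Urbas's results on the optimal transport / second boundary problem for Monge--Amp\`ere, such $u_0$ exists. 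The boundary condition is oblique, since $\langle \nu, Dh(Du)\rangle>0$ for uniformly convex $u$. Short-time existence therefore follows from the standard linearization plus inverse function theorem for oblique fully nonlinear parabolic problems.

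For long-time existence we need a priori estimates independent of $t$.

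\textbf{Step 1 ($u_t$ bounds).} Differentiating in $t$, $\dot u=u_t$ satisfies a linear parabolic equation $\dot u_t=F^{ij}\dot u_{ij}-\iota\cdot D\dot u$ with the oblique condition $h_{p_k}(Du)\dot u_k=0$. The maximum principle and Hopf lemma then give $\min_{t=0}\dot u\le\dot u\le\max_{t=0}\dot u$.

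\textbf{Step 2 (gradient and strict obliqueness).} $|Du|$ is bounded because $Du\in\tilde\Omega$. We then prove $\langle\nu,Dh(Du)\rangle\ge c_0>0$ on $\partial\Omega$ by the Urbas/Brendle--Warren argument. One considers $\chi=\langle\nu,\tilde\nu(Du)\rangle$ and studies its minimum point using the uniform convexity of both domains. The $\iota,\kappa$ terms contribute only first-order errors of size $O(\epsilon_0)$, which the smallness assumption absorbs.

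\textbf{Step 3 ($C^2$ estimates).} This is the main obstacle. We must bound $D^2u$ from above and keep $\lambda_{\min}$ away from zero. Since $F$ is bounded by $n\pi/2$, two-sided bounds on $u_t$ and on $\iota\cdot Du+\kappa\cdot x$ keep $F(D^2u)$ in a compact subinterval of $(-n\pi/2,n\pi/2)$. For convex $u$ this gives only a one-sided control on the eigenvalues, so a genuine $C^2$ argument is needed. We plan to follow Brendle--Warren's trick: consider the maximum of $\log\lambda_{\max}$-type quantities on $\partial\Omega$ in tangential and double normal directions via the obliqueness and the differentiated boundary condition $h_{p_k}u_{k\tau}=0$, and in the interior via concavity of $F$ on the convex cone. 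The duality symmetry $u\leftrightarrow u^*$ (the Legendre transform satisfies an analogous equation with $\lambda\mapsto1/\lambda$, swapping the roles of $\iota,\kappa$ and $\Omega,\tilde\Omega$) will give the lower bound on $D^2u$ from the upper bound for $D^2u^*$. The hypothesis $|\iota|,|\kappa|\le\epsilon_0$ is used exactly to absorb the extra lower-order terms produced by differentiating $\iota\cdot Du+\kappa\cdot x$ twice, and this is where $\epsilon_0$ depends on $\Omega,\tilde\Omega$. Once the equation is uniformly parabolic with concave operator, Lieberman's oblique Krylov--Safonov/Evans--Krylov theory yields $C^{2+\alpha,1+\alpha/2}$ bounds and, by bootstrapping, all higher bounds uniform in $t$.

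\textbf{Step 4 (convergence and uniqueness).} Oscillation decay of $\dot u$ is obtained from the strong maximum principle / Harnack inequality for the linear oblique problem; one can argue as in \cite{Wang2024} via a Huang-type exponential decay. This gives $u(\cdot,t)-c_\infty t\to u^\infty$ smoothly. Passing to the limit, $u^\infty$ solves \eqref{EQE}--\eqref{DuO} with $c=-c_\infty$.

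For uniqueness, suppose $(u_1,c_1)$ and $(u_2,c_2)$ are two solutions. Their difference $w=u_1-u_2$ satisfies a linear elliptic equation $a^{ij}w_{ij}-\iota\cdot Dw=c_1-c_2$ with oblique condition $\beta\cdot Dw=0$. If $c_1>c_2$, the maximum principle and Hopf lemma at the maximum point give a contradiction, and symmetrically for $c_1<c_2$. Hence $c_1=c_2$, and then the strong maximum principle forces $w$ to be constant.
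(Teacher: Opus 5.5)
Your plan is essentially the paper's proof: the flow $u_t=F[D^2u]-f(x,Du)$ with the oblique condition $h(Du)=0$, $\dot u$ bounds from the maximum principle and Hopf lemma, Urbas-type uniform obliqueness using the Legendre dual, barrier estimates for $u_{\beta\beta}$ and $u_{\varsigma\varsigma}$ plus the dual problem for the lower bound on $D^2u$, then long-time existence and convergence, specialized directly to the affine $f=\iota\cdot p+\kappa\cdot x$ and with the choice of $u_0$ and a uniqueness argument made explicit where the paper leaves them implicit. Three small repairs are needed: first, the limit gives $c=c_\infty$, not $-c_\infty$; second, since $f$ is affine, differentiating it twice produces no extra terms, and the smallness of $\iota,\kappa$ is really needed to keep $F[D^2u]$ in a compact subinterval of $(0,n\pi/2)$ (which yields \eqref{cond1}--\eqref{cond2}) and to let the concavity of $\tilde h$ and $h$ dominate the drift terms $\iota\cdot D\tilde h$ and $\kappa\cdot Dh$ in the barriers, as in \eqref{lcond} and \eqref{dfcond}; third, in your uniqueness step the averaged vector $\int_0^1 Dh(sDu_1+(1-s)Du_2)\,ds$ is actually tangential at a boundary maximum of $w$ where $Dw=-\mu\nu$ ($\mu>0$ by Hopf), so the contradiction must come from the concavity of $h$, namely $0=h(Du_1)-h(Du_2)\le\langle Dh(Du_2),Du_1-Du_2\rangle=-\mu\langle Dh(Du_2),\nu\rangle<0$ by Lemma~\ref{Urbas}.
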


Theorem \ref{prethm} exhibits an extension of the previous work on $\iota=0, \kappa = 0$ done by Brendle--
Warren \cite{Brendle2008ABV}, Huang \cite{Huang2014OnTS}, Huang--Ou \cite{MR3707986}, Huang--Ye \cite{MR4012119} and Chen--Huang--Ye \cite{MR3853140}, respectively. Recengtly, the work on $\iota=0$ done by \cite{Wang2023}.

Since Thomas--Yau \cite{Thomas20021075} developed the mean curvature flow of Langrangian submanifolds of Calabi--Yau manifolds, Lagrangian mean curvature flow has been studied by many authors. Smoczyk--Wang obtained the long time existence and convergence of Lagrangian mean curvature flow in some conditions (cf.\cite{Smoczyk200425,Smoczyk2002243}). There are some works according to solving elliptic equations with second boundary conditions by parabolic approach. Kitagawa \cite{Kitagawa+2012+127+160} considered a parabolic version of the mass transport problem, and showed that a solution converges to a solution of the optimal transport problem as $t$ tends to positive infinity, which have the global $C^{2,\alpha}$ and $W^{2,p}$ regularity  worked by Chen--Liu--Wang \cite{Chensb2021}. In \cite{Schnrer2003NeumannAS}, Schnurer--Smoczyk showed that the flow exists for all time and converges eventually to the solution of the prescribed Gauss curvature equation.

Motivated by the above works and translating solution results of Altschuler--Wu \cite{Altschuler1994} and Schnürer \cite{Schnürer2002},  we consider the following more general Lagrangian mean curvature flow  
\begin{equation}\label{EQP}
	\frac{\partial u}{\partial t}=\sum_{i=1}^{n}\arctan \left(\lambda_{i}(D^2u)\right)-f\left(x,Du\right),\quad x\in\Omega,\quad t>0,
\end{equation}
associated with the second boundary value condition
\begin{equation}\label{BC}
	Du(\Omega)=\tilde{\Omega},\quad x\in\partial\Omega,\quad t>0,
\end{equation}
and the initial condition
\begin{equation}\label{IC}
	u=u_0,\quad x\in\Omega, \quad t=0,
\end{equation}
where $f\in C^{2+\alpha}(\overline{\Omega}\times\overline{\tilde{\Omega}}), 0<\alpha<1$ and $u_0\in C^{2+\alpha}(\overline{\Omega})$ is a uniformly convex function satisfying $Du_0(\Omega)=\tilde{\Omega}$. 
Denote
$$\mathscr{A}:=\left\{f(x,p)\in C^{2+\alpha}(\overline{\Omega}\times\overline{\tilde{\Omega}}): f ~is~ concave ~in~x~ ~and ~convex~in~p\right\}.$$


Next, we give the long time existence and convergence results of the flow \eqref{EQP}--\eqref{IC}.
\begin{thm}\label{main-thm}
	Let $f \in \mathscr{A}$, if $|D_xf|,|D_pf|$ and $|D_{xp}f|$ are sufficiently small, then the
	uniformly convex solution of \eqref{EQP}--\eqref{IC} exists for all $ t \geq 0$ and $u(\cdot,t)$ converges to
	a function $u^{\infty}(x,t)=\tilde{u}^{\infty}(x)+c_{\infty}\cdot t$ in $ C^{4+\gamma}(\overline{\Omega})$ as $t \to +\infty$ for any $ 0<\gamma<\alpha.$ That is,
	$$\lim_{t\to+\infty}\|u(\cdot,t)-u^{\infty}(\cdot,t)\|_{C^{4+\gamma}(\overline{\Omega})}=0.$$
	And $\tilde{u}^\infty(x)\in C^{4+\alpha}(\overline{\Omega})$ is a solution of
	\begin{equation}\label{trans solu}
		\left.\left\{\begin{array}{l}
			\sum\limits_{i=1}^{n}\arctan \left(\lambda_{i}(D^2u)\right)=f(x,Du)+c_\infty,\quad x\in\Omega,\\Du(\Omega)=\tilde{\Omega}.
		\end{array}\right.\right.
	\end{equation}
	The constant $c_\infty$ depends only on $n$, the diameters and volumes of $\Omega, \tilde{\Omega}$, the upper and lower boundness of principal curvature of $\partial\Omega, \partial\tilde\Omega$,  $\|u_0\|_{C^{2+\alpha}(\overline{\Omega})}$ and $\|f\|_{ C^{2+\alpha}(\overline{\Omega}\times\overline{\tilde{\Omega}})}$. The solution to \eqref{trans solu} is
	unique up to additions of constants.
	Especially, if $f$ and $u_0$ are smooth, then
	$\tilde{u}^\infty\in C^\infty(\overline{\Omega})$.
\end{thm}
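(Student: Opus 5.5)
The plan is the parabolic method of Schnürer--Smoczyk, Kitagawa and Wang, applied to the problem at hand. First I rewrite the boundary condition \eqref{BC} as a nonlinear oblique condition. Let $h$ be a smooth uniformly concave defining function of $\tilde\Omega$, with $\tilde\Omega=\{h>0\}$ and $|Dh|=1$ on $\partial\tilde\Omega$. For uniformly convex $u$, condition \eqref{BC} is then equivalent to
\[
h(Du)=0 \quad \text{on } \partial\Omega\times(0,T).
\]
Set $F(D^2u)=\sum_i\arctan\lambda_i(D^2u)$. Short-time existence in $C^{2+\alpha,1+\alpha/2}$ should follow from the linearized operator $F^{ij}\partial_{ij}-f_{p_k}\partial_k-\partial_t$ together with the boundary operator $h_{p_k}(Du)\partial_k$. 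This boundary operator is strictly oblique, since $\langle Dh(Du),\nu\rangle>0$ for uniformly convex $u$ with $Du(\Omega)=\tilde\Omega$. I would then apply the inverse function theorem, as in Kitagawa's work, at the compatible initial datum $u_0$.

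For long-time existence I need a priori estimates uniform in $t$, and I would take them in the following order.

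\textbf{Step 1.} Differentiate the equation in $t$. The function $\dot u=u_t$ satisfies a linear parabolic equation with homogeneous oblique condition $h_{p_k}(Du)D_k\dot u=0$. The maximum principle with the Hopf lemma then gives $\min\dot u(\cdot,0)\le\dot u\le\max\dot u(\cdot,0)$. Hence $F(D^2u)$ stays inside a compact subinterval of $(-n\pi/2,n\pi/2)$. Since $|Du|$ is bounded by $\operatorname{diam}\tilde\Omega$, this controls $\dot u$ and $Du$.

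\textbf{Step 2 (strict obliqueness).} Following Urbas and Brendle--Warren, I show $\langle\beta,\nu\rangle\ge c_0>0$ on $\partial\Omega$, where $\beta=Dh(Du)$. I would work with the quantity $\langle\beta,\nu\rangle$ and its dual $\langle\tilde\nu(Du),\nu^*\rangle$, obtained via the Legendre transform $u^*$, which solves an equation of the same type on $\tilde\Omega$. Here the concavity of $f$ in $x$ and its convexity in $p$ are used: they are what is needed for the dual equation to keep the right structure. At a boundary minimum point, a barrier built from $h(Du)$ and the defining function of $\Omega$ gives the bound. Smallness of $|D_xf|$, $|D_pf|$ and $|D_{xp}f|$ is needed to absorb the error terms coming from differentiating $f(x,Du)$.

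\textbf{Step 3 ($C^2$ bounds).} I bound $u_{\xi\xi}$ as follows. Let $M=\max_{\overline\Omega\times[0,T]}\max_{|\xi|=1}u_{\xi\xi}$. If $M$ is attained in the interior, the concavity of $\arctan$ on the convex region (where $F$ is concave), combined with the convexity/concavity of $f$, handles it. If $M$ is attained on the boundary, I split $\xi$ into tangential and oblique parts using Step 2, as Brendle--Warren do. The same argument applied to $u^*$ gives a lower bound $D^2u\ge c I$. Hence the equation is uniformly parabolic and the operator is concave, so Krylov--Safonov and Lieberman's oblique theory yield $C^{2+\alpha,1+\alpha/2}$ bounds. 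I expect this step, with its interplay with Step 2, to be the main obstacle: the error terms from $f$ enter both the boundary barrier and the maximum-principle computation, so a careful choice of the auxiliary function (adding multiples of $|x|^2$ and of $h(Du)$) and smallness constants is required. Schauder theory then bootstraps to higher regularity, and the flow extends for all time.

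\textbf{Convergence.} Set $v=\dot u$. From Step 1, $\operatorname{osc}v$ is nonincreasing. A Harnack-type (strong maximum principle) argument for the linear oblique problem, using the uniform estimates, shows that $\operatorname{osc}v$ decays exponentially. Therefore $\dot u\to c_\infty$ uniformly and $u-c_\infty t$ stays bounded after normalization. Compactness in $C^{4+\gamma}$, from the uniform $C^{4+\alpha}$ bounds obtained by differentiating, together with uniqueness of the limit, then gives convergence to $\tilde u^\infty+c_\infty t$ solving \eqref{trans solu}.

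\textbf{Uniqueness.} Suppose $(u_1,c_1)$ and $(u_2,c_2)$ both solve \eqref{trans solu}, and consider $w=u_1-u_2$. It solves a linear elliptic equation $a^{ij}w_{ij}-b^kw_k=c_1-c_2$ with oblique condition $\beta^kw_k=0$ (the coefficients come from the mean value theorem). If $c_1>c_2$, evaluate at a maximum point of $w$. If the maximum is interior, $a^{ij}w_{ij}\le0$ and $Dw=0$, a contradiction. If it is on the boundary, the Hopf lemma gives a contradiction. By symmetry $c_1=c_2$, and then the strong maximum principle together with Hopf forces $w$ to be constant. This also shows that $c_\infty$ is determined by the stated data.
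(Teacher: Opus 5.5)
\textbf{Overall assessment.} Your overall architecture is the paper's. It consists of the inverse function theorem for short time, the maximum principle for $\dot u$, and Urbas-type obliqueness through the Legendre dual. It then reduces the $C^2$ bound to the boundary by barriers with the Brendle--Warren tangential/oblique splitting, and uses the dual problem for the lower bound.

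\textbf{Convergence.} Your route here differs from the paper's. You show exponential decay of $\operatorname{osc}\dot u$ via a Harnack inequality up to the boundary; the paper simply cites Schn\"urer and Huang. Your argument is a legitimate and more self-contained substitute. It works because the boundary operator satisfied by $\dot u$ is the genuine $\beta=Dh(Du)$, which Step 2 makes uniformly oblique.

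\textbf{The gap: uniqueness.} The argument fails at the boundary case. The mean-value boundary condition $\bar\beta^k w_k=0$, with $\bar\beta=\int_0^1 Dh(sDu_1+(1-s)Du_2)\,ds$, is not oblique exactly where you need it.
\begin{itemize}
\item At a boundary maximum $x_0$ of $w$, the tangential gradient vanishes, so $Dw(x_0)=\partial_\nu w(x_0)\,\nu$.
\item Hopf gives $\partial_\nu w(x_0)<0$.
\item But then $0=\bar\beta\cdot Dw=\partial_\nu w\,\langle\bar\beta,\nu\rangle$ simply forces $\langle\bar\beta,\nu\rangle=0$, so there is no contradiction. Indeed, $s\mapsto h(Du_2+s\,Dw)$ vanishes at $s=0$ and $s=1$, and $\partial_\nu w\,\langle\bar\beta,\nu\rangle$ is its average slope.
\end{itemize}
What closes the argument is the concavity of $h$ combined with the strict obliqueness of $\beta_2:=Dh(Du_2)$ itself (Lemma \ref{Urbas}):
\[
0=h(Du_1)\le h(Du_2)+\langle Dh(Du_2),Dw\rangle=\partial_\nu w(x_0)\,\langle\beta_2,\nu\rangle .
\]
Hence $\partial_\nu w(x_0)\ge 0$, which contradicts Hopf. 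For a boundary minimum you argue symmetrically, using $Dh(Du_1)$. With this replacement, both the exclusion of $c_1\neq c_2$ and the constancy of $w$ when $c_1=c_2$ go through.

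\textbf{Step 1: how the hypotheses enter.} You need $F[D^2u]=\dot u+f(x,Du)$ to stay in a compact subinterval of $(0,n\pi/2)$, not of $(-n\pi/2,n\pi/2)$.
\begin{itemize}
\item This does not follow from the $\dot u$ bound alone. It requires $\operatorname{osc} f<\min\{\min F[D^2u_0],\,n\pi/2-\max F[D^2u_0]\}$, which is the paper's class $\mathscr{A}_\delta$; here it is implied by smallness of $|D_xf|,|D_pf|$.
\item This is what yields \eqref{cond1}--\eqref{cond2}: $\sum F^{ii}\ge\Lambda_1$, and its dual counterpart $\sum \tilde F^{ii}=\sum F^{ii}\lambda_i^2\ge\Lambda_1$. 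Every barrier in your Steps 2--3 rests on these bounds.
\end{itemize}

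\textbf{Step 3: the interior maximum.} It is not disposed of by concavity and convexity alone.
\begin{itemize}
\item After dropping the good terms you only have $Lu_{\xi\xi}\ge f_{\xi\xi}+2f_{\xi p_i}u_{i\xi}$. Here $f_{\xi\xi}\le 0$ by concavity in $x$, and the cross term is of order $|D_{xp}f|\,M$. So an interior maximum gives no contradiction.
\item You need a barrier such as $\frac{2}{\Lambda_1\tilde\theta}\bigl(|D_{xx}f|+2|D_{xp}f|M\bigr)\tilde h$, which uses $\sum F^{ii}\ge\Lambda_1$.
\item You then absorb the resulting $M$ on the right-hand side via the smallness condition \eqref{Dxp}. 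This, rather than the obliqueness step, is where smallness of $|D_{xp}f|$ is really used.
\end{itemize}
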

\begin{rem}\label{Adelta}
	Let $f(x,p)=\iota\cdot p+\kappa\cdot x+c$,  then $f\in \mathscr{A}$.
\end{rem}

We now briefly review some relevant work on translating solutions of nonlinear equations. Zhou \cite{MR4759606} studied the translating mean curvature equations and established general existence results by introducing a non--closed--minimal (NCM) condition on the underlying domain. General capillary--type boundary conditions and existence theorems can be found in \cite{MR3985383}. For the mean curvature flow equation, Gui--Ju--Jian--Lu \cite{Ju2010963,Gui2010441,Jian20113967} derived existence, uniqueness results, gradient estimates, Liouville type theorems, and other properties in certain special cases.  Ju--Bao--Jian \cite{MR2901343} investigated translating solutions of Gauss curvature flow on exterior domains and proved the existence of viscosity solutions to a class of Monge--Amp\`ere equations. Choi--Daskalopoulos \cite{Choi2024} got the classication of ancient solutions to the Gauss curvature flow under the assumption that the solutions are contained in a cylinder of bounded cross--section. 

For translating solitons, Spruck--Sun \cite{MR4236553} proved that any complete, immersed, globally orientable, uniformly 2--convex translating soliton for the mean curvature flow is locally strictly convex. Li \cite{MR3824855} studied translating mean curvature flow of hypersurfaces in \(\mathbb{R}^{n+1}\) and established the global existence of the flow as well as its convergence properties. Rafael \cite{MR3907583} and Santaella--Jose \cite{MR4398426} investigated a special translating soliton equation and proved the existence of a solution on a strip of \(\mathbb{R}^2\) using the Perron method and convexity estimates for the smallest principal curvature, respectively. Lon--Yuan \cite{MR4612703} examined the case where the velocity involves a positive power of the mean curvature and a driving force, and they established global existence and convergence results for the translating solution of the initial boundary value problem in a cylinder. Gao--Li--Wu \cite{MR3282644} demonstrated that when the boundary manifold is a convex cylinder, the mean curvature flow converges to a spacelike hypersurface moving at a constant speed. For blow--up scenarios, James--Wu--Zhang \cite{MR4078822} showed that the flow of noncompact hypersurfaces converges to a translating soliton known as the bowl soliton.

The remainder of this article is organized as follows. To prove Theorem \ref{main-thm}, we first establish the short time existence of the parabolic flow in Section \ref{Section2}. By the second boundary value condition \eqref{BC}, the gradient \( Du \) is bounded. Consequently, Sections \ref{section3} and \ref{sec4} are devoted to deriving the uniformly oblique estimate and the \( C^2 \) estimate. Finally, in Section \ref{section5}, we present the long time existence and convergence results for the parabolic flow, and Theorem \ref{prethm} follows as a corollary of Theorem \ref{main-thm}.

Throughout the following, Einstein's convention of summation over repeated indices
will be adopted. We denote, for a smooth function $f(x,p)$,
$$f_{i}=\frac{\partial f}{\partial x_{i}},\quad f_{p_i}=\frac{\partial f}{\partial p_{i}},\quad f_{ij}=\frac{\partial^{2}f}{\partial x_{i}\partial x_{j}},\quad f_{p_ip_j}=\frac{\partial^{2}f}{\partial p_{i}\partial p_{j}},\quad f_{ijk}=\frac{\partial^{3}f}{\partial x_{i}\partial x_{j}\partial x_{k}},\ldots.$$

\section{Short time existence of the parabolic flow}\label{Section2}
By the methods in the second boundary value problems for equations of Monge--Amp\`ere type \cite{Urbas1997OnTS}, the second boundary condition in \eqref{BC} can be reformulated as
$$h(Du)=0,\quad \quad x\in\partial\Omega,\quad t>0.$$
where we need
\begin{defn}\label{defh}
	A smooth function $h:\mathbb{R}^n\to\mathbb{R}$ is called the defining function of $\tilde{\Omega}$ if
	$$\tilde{\Omega}=\{p\in\mathbb{R}^n:h(p)>0\},\quad|Dh|=1~on~\partial\tilde{\Omega},\quad -\Theta I \leq D^2h\leq-\theta I~in~\tilde{\Omega},$$
	where $\Theta>\theta>0$ are constants depending only on $\partial\tilde{\Omega}$.
\end{defn}
For the convenience, we denote $\beta=(\beta^1,\ldots,\beta^n)$ with $\beta^i:=h_{p_i}(Du)$, and $\nu=$ $(\nu_1,\ldots,\nu_n)$ as the unit inward normal vector at $x\in\partial\Omega$. 
The expression of the inner product is
$$\langle\beta,\nu\rangle=\beta^i\nu_i.$$
According to the proof in \cite{Urbas1997OnTS}, we have the strictly oblique boundary condition.
\begin{lem}\label{Urbas}
	If $u\in C^2(\overline{\Omega})$ with $D^2u>0$, then there holds $ \langle\beta,\nu\rangle>0$.
\end{lem}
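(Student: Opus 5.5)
The plan is Urbas's argument. The lemma is meant in the setting of \eqref{BC}, so I take $Du(\overline{\Omega})=\overline{\tilde{\Omega}}$ and $Du(\partial\Omega)=\partial\tilde{\Omega}$; this holds because $D^2u>0$ makes $Du$ a diffeomorphism onto its image. I then work at a boundary point $x_0\in\partial\Omega$.

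\textbf{Step 1: $\beta$ is the inward normal of $\tilde\Omega$.} Since $h(Du)=0$ on $\partial\Omega$ and $|Dh|=1$ on $\partial\tilde{\Omega}$, the vector $\beta(x_0)=Dh(Du(x_0))$ is the unit inward normal $\tilde\nu$ of $\tilde{\Omega}$ at $Du(x_0)$.

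\textbf{Step 2: $\beta$ is parallel to $D^2u\,\nu$.} Differentiate $h(Du)=0$ along any tangent vector $\tau$ of $\partial\Omega$ at $x_0$:
\[
h_{p_k}(Du)\,u_{ki}\tau_i=0 ,
\]
that is, $\langle D^2u\,\beta,\tau\rangle=0$ for every tangent $\tau$. Hence $D^2u\,\beta=\mu\,\nu$ for some scalar $\mu$, and $\mu\neq0$ because $D^2u$ is invertible and $\beta\neq0$.

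\textbf{Step 3: the sign of $\mu$.} $h(Du)>0$ in $\Omega$ and $h(Du)=0$ on $\partial\Omega$, so the inward normal derivative satisfies
\[
\partial_\nu\big(h(Du)\big)=h_{p_k}u_{ki}\nu_i=\langle D^2u\,\beta,\nu\rangle=\mu\ge 0 .
\]
Combined with $\mu\neq0$ from Step 2, this gives $\mu>0$.

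\textbf{Step 4: conclusion.} Write $w=D^2u$, which is positive definite, so $\beta=\mu\, w^{-1}\nu$. Then
\[
\langle\beta,\nu\rangle=\mu\,\langle w^{-1}\nu,\nu\rangle>0 ,
\]
since $w^{-1}>0$ and $\mu>0$.

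The one delicate point is the sign in Step 3. A one-sided derivative only gives $\mu\ge0$, and strictness must come from the nondegeneracy in Step 2. This is exactly where $D^2u>0$ on the closed domain, and the fact that $Du$ maps boundary to boundary, are used. No quantitative bound is needed here; the uniform obliqueness estimate is deferred to Section \ref{section3}.
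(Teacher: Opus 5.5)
Your argument is correct and is essentially Urbas's proof, which the paper cites instead of reproducing. The tangential derivatives of $h(Du)$ vanish, so $D^2u\,\beta$ is a nonzero, nonnegative multiple $\mu\nu$ of the inner normal, and positivity of $D^2u$ then forces $\langle\beta,\nu\rangle>0$. Your formula $\langle\beta,\nu\rangle=\mu\langle (D^2u)^{-1}\nu,\nu\rangle$, together with $\mu\langle\beta,\nu\rangle=\langle D^2u\,\beta,\beta\rangle$, gives exactly the identity $\langle\beta,\nu\rangle^2=u^{ij}\nu_i\nu_j\,h_{p_k}h_{p_l}u_{kl}$ that the paper later uses in \eqref{obq}.
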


For $\lambda=\left(\lambda_1,\cdots,\lambda_n\right)$, we denote
$$F(\lambda):=\sum_{i=1}^{n}\arctan\lambda_{i},$$   
it is obvious that $F$ is a smooth symmetric function defined on $\Gamma_{n}^{+}$, where
$$\Gamma_{n}^{+}:=\{\left(\lambda_1,\cdots,\lambda_n\right)\in \mathbb{R}^n:\lambda_{i}>0,i=1,\ldots,n\}. $$
By direct calculation, we get
\begin{equation}\label{bound}
	F\left(0,\ldots,0\right)=0,\quad F\left(\infty,\ldots,\infty\right)=\frac{n\pi}{2},
\end{equation}
\begin{equation}\label{posdef}
	\frac{\partial F}{\partial\lambda_i}=\frac{1}{1+\lambda_{i}^2}>0,\quad 1\leq i\leq n \quad \text{on} \quad \Gamma_{n}^{+},
\end{equation}
and 
\begin{equation}\label{concave}
	\left(\frac{\partial^2 F}{\partial\lambda_i\partial\lambda_j}\right)=\left(\frac{-2\delta_{ij}\lambda_j}{\left(1+\lambda_i^2\right)^2}\right)\leq 0\quad \text{on} \quad \Gamma_{n}^{+}. 
\end{equation}
For a positive definite symmetric \( n \times n \) matrix \( A = (a_{ij}) \) with eigenvalues \( \lambda_1(A), \cdots, \lambda_n(A) \), the function \( F \) can also be expressed as \( F[A] \), where the independent variables are the entries \( a_{ij} \). We denote
$$ F^{ij}:=\frac{\partial F}{\partial a_{ij}},\quad F^{ij,rs}:=\frac{\partial^2F}{\partial a_{ij}\partial a_{rs}}.$$
Thus, the parabolic flow \eqref{EQP}--\eqref{IC} is equivalent
to the evolution problem
\begin{equation}\label{evoproblem}
	\left\{
	\begin{array}{lll}
		\dfrac{\partial u}{\partial t}=F[D^2u]-f(x,Du),&x\in\Omega,&t>0,\\
		h(Du)=0,&x\in\partial\Omega,&t>0,\\
		u=u_0,&x\in\overline{\Omega},&t=0.
	\end{array}
	\right.
\end{equation}

To establish the short time existence of classical solutions of \eqref{evoproblem}, we use the inverse
function theorem in Fréchet spaces and the theory of linear parabolic equations for
oblique boundary condition. Our approach is inspired by the methodology used in proving the short time existence of convex solutions for the second boundary value problem in the Lagrangian mean curvature flow, as detailed in \cite{Huang2014OnTS}. We include the details for the convenience of the readers.

\begin{lem}\label{inv2}
	(See Theorem 2 in \cite{Eke2011}) Let $X$ and $Y$ be Banach spaces with the norms $\|\cdot\|_X$ and $\|\cdot\|_Y$, respectively. Suppose
	$$J:X\to Y$$
	is continuous and Gâteaux--differentiable, with $J[v_{0}] = w_{0}$. Assume that the derivative $DJ[v]$ has a right inverse $L[v]$, uniformly bounded about $v$ in a neighborhood of $v_0$. That is, for any $y\in Y$,
	$$DJ[v]L[v]y=y,$$
	and there exist $R>0 $ and $m>0$, if $\|v-v_0\|_X\leq R$, we have
	$$\|L[v]\|\leq m,$$
	where $\|L[v]\|:=\sup_{\|y\|_Y\leq1}\|L[v](y)\|_X$. Then, for every $w \in Y$ , if
	$$\|w-w_0\|_Y<\frac Rm,$$
	there is some $v \in X$ such that
	$$\|v-v_0\|_X<R,$$
	and
	$$J(v)=w.$$
\end{lem}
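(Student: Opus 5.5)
This is Ekeland's inverse function theorem (Theorem 2 in \cite{Eke2011}), and the paper quotes it as a black box. If I had to prove it, I would follow Ekeland's route via the Ekeland variational principle, since that avoids any continuity assumption on $v\mapsto DJ[v]$; only Gâteaux differentiability is available. Fix $w$ with $\|w-w_0\|_Y<R/m$. On the complete metric space $\overline{B}_R(v_0)\subset X$, consider the continuous, nonnegative functional
\[
\Phi(v)=\|J(v)-w\|_Y .
\]
Choose $m'>m$ close enough to $m$ that $\|w-w_0\|_Y<R/m'$, and set $\varepsilon=1/m'$. Since $\Phi(v_0)=\|w_0-w\|_Y$, Ekeland's principle with parameter $\varepsilon$ gives a point $\bar v$ with the following properties:
\begin{itemize}
\item[(a)] $\Phi(\bar v)+\varepsilon\|\bar v-v_0\|_X\le \Phi(v_0)$;
\item[(b)] $\Phi(v)>\Phi(\bar v)-\varepsilon\|v-\bar v\|_X$ for all $v\ne\bar v$ in the ball.
\end{itemize}
From (a) we get $\|\bar v-v_0\|_X\le m'\Phi(v_0)<R$. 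So $\bar v$ is an interior point, and (b) holds for all $v$ in a neighbourhood of $\bar v$.

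The key step is to show $\Phi(\bar v)=0$. Suppose instead $y:=J(\bar v)-w\neq0$. Take the direction $z=-L[\bar v]y$, so that $DJ[\bar v]z=-y$ and $\|z\|_X\le m\|y\|_Y$. Gâteaux differentiability then gives
\[
J(\bar v+tz)=J(\bar v)-ty+o(t),
\]
hence $\Phi(\bar v+tz)=(1-t)\|y\|_Y+o(t)$ for small $t>0$. Inserting this into (b) yields
\[
(1-t)\|y\|+o(t)>\|y\|-\varepsilon t m\|y\| ,
\]
that is, $-t\|y\|+o(t)>-t(m/m')\|y\|$. Dividing by $t$ and letting $t\to0^+$ gives $\|y\|\le (m/m')\|y\|$. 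Since $m/m'<1$, this contradicts $y\neq0$.

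Therefore $J(\bar v)=w$ with $\|\bar v-v_0\|_X<R$, as claimed. The main delicacy is in the first step: choosing $\varepsilon$ strictly below $1/m$, using the strict inequality on $\|w-w_0\|_Y$, so that the Ekeland point lies strictly inside the ball and the final inequality is strict. The uniform bound on $L[v]$ is used exactly once, at $\bar v$, and that is legitimate because $\bar v$ lies in the $R$-ball.
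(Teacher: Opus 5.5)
Your proof is correct and is essentially the argument behind Ekeland's Theorem~2 in \cite{Eke2011}, which the paper only cites and does not prove: apply Ekeland's variational principle to $\|J(v)-w\|_Y$ on the closed $R$-ball with slope $1/m'$ for some $m'>m$, then test the resulting almost-minimality in the direction $-L[\bar v](J(\bar v)-w)$. The one point that needs care is choosing $m'>m$ through the strict inequality $\|w-w_0\|_Y<R/m$. This keeps $\bar v$ in the interior of the ball and makes the final inequality $\|y\|\le (m/m')\|y\|$ a genuine contradiction, and you handle it correctly.
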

%

Now, we can prove the short time existence of solutions of \eqref{evoproblem}. In the following, we denote
$\Omega_{T}:=\Omega\times(0,T]$ to be the parabolic cylinder, and $\overline{\Omega_{T}}=\overline{\Omega}\times[0,T]$ is the closure of $\Omega_{T}$ with  parabolic boundary $\partial_p\Omega_{T}:=\overline{\Omega_T}\setminus \Omega_T$.
\begin{prop}\label{shorttime}
	Let $f \in \mathscr{A}$, there exist $T_{\max}>0$ and $u\in C^{2+\alpha_0,\frac{2+\alpha_0}2}(\overline{\Omega}_{T_{\max}})$ for some $0<\alpha_0<\alpha$, such that $u$ is a unique solution of \eqref{evoproblem} and is strictly convex in $x$ variable, where $T_{\max}$  depends only on $\Omega,\tilde{\Omega},\|u_0\|_{{C^{2+\alpha}(\overline{\Omega})}}, \|f\|_{{C^{\alpha}(\overline{\Omega}\times\overline{\tilde{\Omega}})}}$. 
\end{prop}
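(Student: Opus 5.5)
We will prove the statement with the inverse function theorem of Lemma~\ref{inv2}, following the approach of Huang \cite{Huang2014OnTS} for the Lagrangian mean curvature flow.

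\textbf{Setup.} Fix a small $T>0$ and $0<\alpha_0<\alpha$. Set
$$X:=C^{2+\alpha_0,\frac{2+\alpha_0}{2}}(\overline{\Omega_T}),\qquad Y:=C^{\alpha_0,\frac{\alpha_0}{2}}(\overline{\Omega_T})\times C^{1+\alpha_0,\frac{1+\alpha_0}{2}}(\partial\Omega\times[0,T])\times C^{2+\alpha_0}(\overline{\Omega}),$$
with compatibility conditions built into the spaces. Define
$$J[v]:=\Bigl(v_t-F[D^2v]+f(x,Dv),\; h(Dv)|_{\partial\Omega\times[0,T]},\; v(\cdot,0)\Bigr).$$
For the reference point we take $v_0(x,t):=u_0(x)+t\bigl(F[D^2u_0]-f(x,Du_0)\bigr)$. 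This is not quite in $X$, since $F[D^2u_0]$ is only $C^\alpha$ in $x$. To repair it, we solve a linear parabolic problem with coefficients frozen at $u_0$, or mollify in time, to get $v_0\in X$ with:
\begin{itemize}
\item $v_0(\cdot,0)=u_0$ and $\partial_t v_0(\cdot,0)=F[D^2u_0]-f(\cdot,Du_0)$;
\item $D^2v_0>0$ on $\overline{\Omega_T}$ for $T$ small.
\end{itemize}
Set $w_0:=J[v_0]$. Its first component vanishes at $t=0$ and is small in the $C^{\alpha_0,\alpha_0/2}$ norm for small $T$. The loss from $\alpha$ to $\alpha_0$ is what gives this smallness via interpolation. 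Its boundary component is $h(Dv_0)$, which vanishes at $t=0$ because $h(Du_0)=0$ on $\partial\Omega$, and is $O(T^{\epsilon})$ in norm. The target is $w:=(0,0,u_0)$, so $\|w-w_0\|_Y\to0$ as $T\to0$.

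\textbf{Linearization.} The Gâteaux derivative is
$$DJ[v]\phi=\Bigl(\phi_t-F^{ij}(D^2v)\phi_{ij}+f_{p_i}(x,Dv)\phi_i,\; h_{p_i}(Dv)\phi_i,\; \phi(\cdot,0)\Bigr).$$
For $v$ in an $X$-ball $\|v-v_0\|_X\le R$ around $v_0$, continuity in time keeps $D^2v$ close to $D^2u_0$, shrinking $T$ if needed. Then $D^2v$ is positive definite with eigenvalues bounded above and below. By \eqref{posdef} the operator is uniformly parabolic with $C^{\alpha_0,\alpha_0/2}$ coefficients. By Lemma~\ref{Urbas} we have $\langle\beta,\nu\rangle\ge c_0>0$, uniformly over this ball by continuity. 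Linear Schauder theory for oblique derivative parabolic problems (Lieberman) then shows that $DJ[v]$ is invertible under the compatibility conditions. It also gives the uniform bound $\|L[v]\|\le m$, with $m$ depending only on the ellipticity and obliqueness constants and the coefficient norms.

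\textbf{Conclusion.} Lemma~\ref{inv2} then produces $u$ with $J[u]=w$ and $\|u-v_0\|_X<R$. This $u$ solves \eqref{evoproblem} on $\Omega_T$. Strict convexity in $x$ follows from $R$ small and $T$ small, since $\|D^2u-D^2v_0\|_{C^0}$ is controlled by $R$, and $D^2v_0$ is close to $D^2u_0>0$. All constants depend only on $\Omega,\tilde\Omega,\|u_0\|_{C^{2+\alpha}},\|f\|_{C^\alpha}$, which gives the stated dependence of $T_{\max}$.

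\textbf{Uniqueness.} Suppose $u_1,u_2$ are two convex solutions. Their difference $\phi=u_1-u_2$ satisfies a linear equation $\phi_t=a^{ij}\phi_{ij}+b^i\phi_i$, where
$$a^{ij}=\int_0^1F^{ij}\bigl(sD^2u_1+(1-s)D^2u_2\bigr)\,ds,$$
and $b^i$ is obtained from $f$ the same way. The boundary condition becomes $\tilde\beta\cdot D\phi=0$ with $\tilde\beta=\int_0^1Dh\bigl(sDu_1+(1-s)Du_2\bigr)\,ds$. This $\tilde\beta$ is oblique by the concavity of $h$, as in Urbas' argument. With initial data $\phi(\cdot,0)=0$, the maximum principle and the Hopf lemma give $\phi\equiv0$.

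\textbf{Main obstacle.} The key difficulty is arranging the compatibility conditions and the smallness of $\|w-w_0\|_Y$ in Hölder spaces. This is why we work with $\alpha_0<\alpha$. We also need the uniform obliqueness and uniform bound on $L[v]$ over the whole ball, so that $R/m$ stays fixed while $\|w-w_0\|_Y\to0$ as $T\to0$.
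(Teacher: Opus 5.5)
Your proposal follows the paper's proof essentially step for step: Lemma~\ref{inv2} is applied around an auxiliary $v_0$ obtained from a linear parabolic problem with $\partial_t v_0(\cdot,0)=F[D^2u_0]-f(\cdot,Du_0)$, smallness of $\|w-w_0\|_Y$ comes from the loss $\alpha\to\alpha_0$, and a uniformly bounded right inverse comes from Lieberman's oblique Schauder theory together with Lemma~\ref{Urbas}; you are also more explicit than the paper about compatibility conditions and uniformity over the ball. One small correction in the uniqueness step: concavity of $h$ does not by itself make the averaged field $\tilde\beta$ oblique at every boundary point, so argue instead at a positive boundary maximum $(x_0,t_0)$ of $\phi=u_1-u_2$, where the Hopf lemma gives $Du_1-Du_2=-c\nu$ with $c>0$ and concavity yields $0=h(Du_1)-h(Du_2)\le -c\langle Dh(Du_2),\nu\rangle<0$ by Lemma~\ref{Urbas}, a contradiction.
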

\begin{proof}
	For $T>0,0<\alpha_0<\alpha$, denote the Banach spaces
	$$X=\left\{u(x,t)\in C^{2+\alpha_0,1+\frac{\alpha_0}{2}}(\overline{\Omega_T})|u~is~strictly~convex~in~x \right\},$$ 
	and
	$$Y=C^{\alpha_0,\frac{\alpha_0}{2}}(\overline{\Omega_T})\times C^{1+\alpha_0,\frac{1+\alpha_0}2}(\partial\Omega\times[0,T])\times C^{2+\alpha_0}(\overline{\Omega}),$$
	where
	$\|\cdot\|_{X}=\|\cdot\|_{C^{2+\alpha_0,1+\frac{\alpha_0}{2}}(\overline{\Omega_T})}$ and $$\|\cdot\|_{Y}=\|\cdot\|_{{C^{\alpha_0,\frac{\alpha_0}{2}}(\overline{\Omega_{T}})}}+\|\cdot\|_{{C^{1+\alpha_0,\frac{1+\alpha_0}{2}}(\partial\Omega\times[0,T])}}+\|\cdot\|_{{C^{2+\alpha_0}(\overline{\Omega})}}.$$
	Define a map $J: X\to Y$ by
	\begin{equation*}
		J[v]=\left\{
		\begin{array}{lll}
			\dfrac{\partial v}{\partial t}-F[D^2v]+f(x,Dv),&x\in\Omega,&T\geq t>0,\\
			h(Dv),&x\in\partial\Omega,&T\geq t>0,\\
			v,&x\in\overline{\Omega},&t=0.
		\end{array}
		\right.
	\end{equation*}
	Thus, the strategy is to use Lemma \ref{inv2} to obtain the short time
	existence result.

	By Theorems 8.8, 8.9 in \cite{lieberman} and Lemma \ref{Urbas}, there exists $T_1 > 0$, which depends only on $\Omega,\tilde{\Omega},\|u_0\|_{{C^{2+\alpha}(\overline{\Omega})}}$ and $ \|f\|_{{C^{\alpha}(\overline{\Omega}\times\overline{\tilde{\Omega}})}}$ such that we can find
	$$v_0\in C^{2+\alpha,1+\frac{\alpha}2}(\overline{\Omega}_{T_1})$$
	to be strictly convex in $x$ variable (obviously $v_0\in X$), which satisfies the following problems:
	\begin{equation}\label{eq2.6}
		\left\{
		\begin{array}{lll}
			\dfrac{\partial v_0}{\partial t}-\Delta v_0+f(x,Dv_0) =F[D^2u_0]-\Delta u_0,&x\in\Omega,&T_1\geq t>0,\\
			h_p(Du_0)\cdot Dv_0=h(Du_0)+h_p(Du_0)\cdot Du_0,&x\in\Omega,&T_1\geq t>0,\\
			v_0=u_0,&x\in\overline{\Omega},&t=0.
		\end{array}
		\right.
	\end{equation}
	
	We want to get the solutions $u$ from $v_0$, let
	$$f_0:=\frac{\partial v_0}{\partial t}-F[D^2v_0]+f(x,Dv_0),\quad w_0:=(f_0,h(Dv_0),u_0),$$
	then we can show that 
	$$J[v_0]=w_0$$
	and by \eqref{eq2.6} and  internal interpolation inequality, it turns out that
	\begin{equation*}
		\begin{aligned}
			\|f_0\|_{C^{\alpha_0,\frac{\alpha_0}{2}}(\overline{\Omega}_{T_{1}})}& =\|\Delta v_0-\Delta u_0+F[D^2u_0]-F[D^2v_0]\|_{C^{\alpha_0,\frac{\alpha_0}{2}}(\overline{\Omega}_{T_1})} \\
			&\leq\|\Delta v_0-\Delta u_{0}\|_{{C^{{\alpha_0,\frac{\alpha_0}{2}}}(\overline{\Omega}_{{T_{1}}})}}+\|F[D^{2}u_{0}]-F[D^{2}v_0]\|_{{C^{{\alpha_0,\frac{\alpha_0}{2}}}(\overline{\Omega}_{{T_{1}}})}} \\
			&\leq C\|D^2v_0-D^2u_0\|_{C^{\alpha_0,\frac{\alpha_0}{2}}(\overline{\Omega}_{T_1})},
		\end{aligned}
	\end{equation*}
	and 
	\begin{equation*}
		\begin{aligned}
			&\quad \|h(Dv_0)\|_{{C^{1+\alpha_0,\frac{1+\alpha_0}{2}}(\partial\Omega\times[0,T_1])}}\\
			&=\|h(v_0)-h(Du_0)+h_p(Du_0)\cdot D(u_0-v_0)\|_{{C^{1+\alpha_0,\frac{1+\alpha_0}{2}}(\partial\Omega\times[0,T_1])}}\\
			&\leq \|h(Dv_0)-h(Du_0)\|_{{C^{1+\alpha_0,\frac{1+\alpha_0}{2}}(\partial\Omega\times[0,T_1])}}\\
			&~~~+\|h_p(Du_0)\cdot D(u_0-v_0)\|_{{C^{1+\alpha_0,\frac{1+\alpha_0}{2}}(\partial\Omega\times[0,T_1])}}\\
			&\leq C\|Dv_0-Du_0\|_{{C^{1+\alpha_0,\frac{1+\alpha_0}{2}}(\partial\Omega\times[0,T_1])}}\\
			&\leq C\left(\|D^2v_0-D^2u_0\|_{C^{\alpha_0,\frac{\alpha_0}{2}}(\overline{\Omega_{T_1}})}+\|Dv_0-Du_0\|_{C(\overline{\Omega_{T_1}})}\right),
		\end{aligned}
	\end{equation*}
	where $C$ is a constant depending only on $n,\text{diam}(\Omega),|\Omega|,\theta$ and $\Theta$.
	
	Since $v_0\in C^{2+\alpha,\frac{2+\alpha}2}(\overline{\Omega_{T_1}})$ and $v_0(x,t)|_{t=0}=u_0(x)$, we obtain
	\begin{equation}\label{lim2.3}
		\lim_{t\to0}\|v_0(\cdot,t)-u_{0}\|_{C^{2}(\overline{\Omega})}=0\quad and \quad \lim_{t\to0}\|D^{2}v_0(\cdot,t)-D^{2}u_{0}(\cdot)\|_{C^{\alpha_0}(\overline{\Omega})}=0.
	\end{equation}
	For any $\alpha_0<\alpha$, we have
	\begin{align*}
		&\frac{|(D^{2}v_0(x,t)-D^{2}u_{0}(x))-(D^{2}v_0(y,\tau)-D^{2}u_{0}(y))|}{|x-y|^{\alpha_0}+|t-\tau|^{{\frac{\alpha_0}{2}}}} \\
		\leq&\frac{|(D^2v_0(x,t)-D^2u_0(x))-(D^2v_0(y,t)-D^2u_0(y))|}{|x-y|^{\alpha_0}} \\
		&+|t-\tau|^{\frac{\alpha-\alpha_0}2}\frac{|(D^2v_0(y,t)-D^2u_0(y))-(D^2v_0(y,\tau)-D^2u_0(y))|}{|t-\tau|^\frac{\alpha}2}. 
	\end{align*}
	Then, we get
	\begin{equation}\label{eq2.4}
		\begin{aligned}
			\|D^{2}v_0-D^{2}u_{0}\|_{C^{\alpha_0,\frac{\alpha_0}{2}}(\overline{\Omega_{T_1}})}&\leq\max_{0\leq t\leq T_1}\|D^{2}v_0(\cdot,t)-D^{2}u_{0}(\cdot)\|_{C^{\alpha_0}(\overline{\Omega})}\\
			&\quad +T_1^{\frac{\alpha-\alpha_0}2}\|D^2v_0-D^2u_0\|_{C^{\alpha,\frac{\alpha}2}(\overline{\Omega_{T_1}})}.
		\end{aligned}
	\end{equation}
	Combining \eqref{lim2.3} with \eqref{eq2.4}, we obtain
	\begin{equation}\label{lim2.5}
		\lim_{T_{1}\to0}\|v_0-u_0\|_{C^{2+\alpha_0,\frac{2+\alpha_0}{2}}(\overline{\Omega_{T_1}})}=0.
	\end{equation}
	Hence, we conclude for any given positive constants $R$ and $m$, 
	there exists $T_{\max}>0\stackrel{\cdot}{(T_{\max}\leq T_1)}$ to be small enough such that
	$$\|f_0\|_{C^{\alpha_0,\frac{\alpha_0}{2}}(\overline{\Omega_{T_{\max}}})}+\|h(Dv_0)\|_{{C^{1+\alpha_0,\frac{1+\alpha_0}{2}}(\partial\Omega\times[0,T_{\max}])}}<\frac Rm,$$
	Therefore, for $T=T_{\max}$ and $w=(0,0,u_0)$,
	$$\|w-w_0\|_Y=\|0-f_0\|_{C^{\alpha_0,\frac{\alpha_0}2}(\overline{\Omega_{T_{\max}}})}+\|0-h(Dv_0)\|_{{C^{1+\alpha_0,\frac{1+\alpha_0}{2}}(\partial\Omega\times[0,T_1])}}<\frac Rm.$$

	Next, we show $DJ[v]$ has a uniformly bounded right inverse $L[v]$ near $v_0$. By density, for the above $R>0$,  there exists $v\in X$ such that
	$$\|v-v_0\|_{C^{2+\alpha_0,\frac{2+\alpha_0}{2}}(\overline{\Omega_{T_1}})}<R.$$
	The computation of the Gâteaux derivative shows that for any $u\in X$,
	\begin{equation*}
		DJ[v](u)=
		\left\{
		\begin{array}{lll}
			\dfrac{\partial u}{\partial t}-F^{ij}[D^2v]u_{ij}+f_{p_i}(x,Dv) u_i,&x\in\Omega,&T\geq t>0,\\
			h_{p_i}(Dv)u_i,&x\in\partial\Omega,&T\geq t>0,\\
			u,&x\in\overline{\Omega},&t=0.
		\end{array}
		\right.
	\end{equation*}	
	For each $y:=(\overline{f},\overline{g},\overline{w})\in Y$, $\|y\|_Y \leq 1$ and the above $m>0$, using the Theorems 8.8, 8.9 in \cite{lieberman}, we know that there exists a unique $u\in X(T=T_{\max})$ satisfying  $DJ[v](u)=y$, that is
	\begin{equation*}
		\left\{
		\begin{array}{lll}
			\dfrac{\partial u}{\partial t}-F^{ij}[D^2v]u_{ij}+f_{p_i}(x,Dv) u_i=\overline{f},& x\in\Omega,&T_{\max}\geq t>0,\\
			h_{p_i}(Dv)u_i=\overline{g},& x\in\partial\Omega,&T_{\max}\geq t>0,\\
			u=\overline{w},&x\in\overline{\Omega},&t=0,
		\end{array}
		\right.
	\end{equation*}
	and 
	\begin{align*}
		&\quad \|u\|_{C^{2+\alpha_0,\frac{2+\alpha_0}2}(\overline{\Omega_{T_{\max}}})}\\
		&\leq m\left(\|\overline{f}\|_{C^{\alpha_0,\frac{\alpha_0}{2}}(\overline{\Omega_{T_{\max}}})}+\|\overline{g}\|_{C^{1+\alpha_0,\frac{1+\alpha_0}2}(\partial\Omega\times[0,T_{\max}])}+\|\overline{w}\|_{C^{2+\alpha_0}(\overline{\Omega})}\right).
	\end{align*}
	For $T = T_{\max}$, by the definition of the Banach spaces $X$ and $Y$, we can rewrite the above
	Schauder estimates as
	$$\|u\|_X\leq m.$$
	It means that the derivative $DJ[v](u) = y$ has a right inverse $u = L[v](y)$ and
	$$\|L[v]\|=\sup_{\|y\|_Y\leq1}\|L[v](y)\|_X\leq m.$$
	Thus, By Lemma \ref{inv2}, we get the short time existence of solutions of \eqref{evoproblem}.
	By the strong maximum principle, the strictly convex solution to \eqref{evoproblem} is unique.
\end{proof}

\section{The uniformly obliqueness estimate}\label{section3}
In this section, we turn to establish the uniformly obliqueness estimate.
This treatment is similar to the
problems presented in \cite{Urbas1997OnTS,Schnrer2003NeumannAS,Huang2014OnTS,Wang2024}, but requires some modification to accommodate
the more general situation. Specifically, the structure conditions of $F$ are
needed in order to derive differential inequalities from barriers which can be used.

By Proposition \ref{shorttime} and the regularity theory of parabolic equations, we may assume that $u$ is a strictly convex solution of \eqref{EQP}--\eqref{IC}  in the class $C^{2+\alpha_0,1+\frac{\alpha_0}{2}}(\overline{\Omega_T})\cap$ $C^{4+\alpha_0,2+\frac{\alpha_0}{2}}(\Omega_T)$ for some $T=T_{\max}>0$ and $0<\alpha_0<\alpha$.

\begin{lem}\label{utestimate}
	($\dot{u}$-estimates) If the convex solution $u$ to \eqref{EQP}--\eqref{IC} exists and $f\in \mathscr{A}, $, then 
	$$\min_{\overline{\Omega}}F[D^2u_0]-\max_{\overline{\Omega}}f(x,Du_0)\leq\dot{u}\leq\max_{\overline{\Omega}}F[D^2u_0]-\min_{\overline{\Omega}}f(x,Du_0),$$
	where $\dot{u}:=\frac{\partial u}{\partial t}$.
\end{lem}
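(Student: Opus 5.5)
Differentiate the equation \eqref{EQP} in $t$ and apply the maximum principle to $\dot u$, using the oblique boundary condition to rule out extrema on the lateral boundary. Set $w:=\dot u$. Since $u\in C^{4+\alpha_0,2+\frac{\alpha_0}{2}}(\Omega_T)$, we may differentiate \eqref{evoproblem} in $t$ and obtain the linear equation
\begin{equation*}
\frac{\partial w}{\partial t}=F^{ij}[D^2u]\,w_{ij}-f_{p_i}(x,Du)\,w_i \quad \text{in } \Omega_T,
\end{equation*}
which is uniformly parabolic on compact time intervals because $D^2u>0$ and \eqref{posdef} holds. This equation has no zeroth order term. At $t=0$ we have $w=F[D^2u_0]-f(x,Du_0)$, so the claimed bounds hold on $\overline{\Omega}\times\{0\}$. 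It remains to show that the maximum and minimum of $w$ over $\overline{\Omega_T}$ are attained at $t=0$.

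On the lateral boundary, differentiate $h(Du)=0$ in $t$ to get $h_{p_i}(Du)\,w_i=0$, that is, $\langle\beta,Dw\rangle=0$ on $\partial\Omega\times(0,T]$. By Lemma \ref{Urbas}, $\langle\beta,\nu\rangle>0$ there, so $\beta$ is a strictly oblique direction pointing into $\Omega$.

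Suppose the maximum of $w$ is attained at $(x_0,t_0)$ with $t_0>0$. If $x_0\in\Omega$, the strong maximum principle forces $w$ to be constant on $\overline\Omega\times[0,t_0]$, so the maximum also occurs at $t=0$. If instead $x_0\in\partial\Omega$ and $w$ is not constant, the Hopf lemma for oblique derivatives gives $\langle\beta,Dw\rangle(x_0,t_0)<0$, since $\beta$ points inward and $w$ strictly decreases moving into the domain. This contradicts $\langle\beta,Dw\rangle=0$. The minimum is handled symmetrically.

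The only delicate point is regularity up to the boundary: $u$ is merely $C^{2+\alpha_0,1+\frac{\alpha_0}{2}}$ up to $\partial\Omega$, so the boundary differentiation and the Hopf lemma need justification. The Hopf lemma applies because the coefficients $F^{ij}[D^2u]$ and $f_{p_i}(x,Du)$ are continuous and bounded near $(x_0,t_0)$, and the domain is smooth. To justify differentiating in $t$ up to the boundary, one can argue with difference quotients $\frac{u(x,t+s)-u(x,t)}{s}$. These satisfy a linear parabolic equation whose coefficients are the averaged coefficients $\int_0^1 F^{ij}[\cdot]\,d\tau$, together with an oblique boundary condition obtained by the mean value theorem applied to $h$. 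Uniform obliqueness of this averaged boundary operator follows from the concavity of $h$ together with the argument behind Lemma \ref{Urbas}. Applying the same maximum principle argument to the difference quotients and letting $s\to0$ then yields the bounds.

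Note that the membership $f\in\mathscr A$ is not needed for this lemma. Only the structure of $F$ and the fact that $f$ does not depend on $t$ are used.
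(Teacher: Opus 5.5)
Your proposal is correct and is essentially the paper's proof: differentiate the equation in $t$, use the strong maximum principle for interior extrema, and exclude lateral-boundary extrema by combining the parabolic Hopf lemma and Lemma~\ref{Urbas} with $\dot u_\beta=0$, which comes from differentiating $h(Du)=0$ in $t$. Your difference-quotient justification of the boundary regularity is a sound refinement that the paper skips, though for small $s$ the obliqueness of the averaged boundary vector follows more simply from continuity of $Dh$ and $Du$ on the compact set $\overline{\Omega_T}$ together with $\min_{\partial\Omega\times[0,T]}\langle\beta,\nu\rangle>0$ than from the concavity of $h$.
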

\begin{proof}
	From \eqref{EQP}, a direct computation shows that
	$$\frac{\partial(\dot{u})}{\partial t}-F^{ij}\partial_{ij}(\dot{u})+f_{p_i} \partial_i(\dot{u})=0,\quad in~\Omega_{T}.$$
	Using the maximum principle, we see that
	$$\min_{\overline{\Omega_T}}(\dot{u})=\min_{\partial_p\Omega_T}(\dot{u}).$$
	
	Suppose that \(\dot{u}(x_0, t_0) = \min_{\overline{\Omega_T}} (\dot{u})\). We considering the following two cases based on the location of \((x_0, t_0)\) with $t_0>0$:
	
	\begin{enumerate}
		\item 
		If the minimum of \(\dot{u}\) is attained at an interior point \((x_0, t_0) \in \Omega_T\), then by the strong maximum principle for parabolic equations, $u(x,t)=u_0(x)+\dot{u}(x_0, t_0)t$ throughout \(\overline{\Omega}\times[0,t_0]\).  On the other hand, by equation \eqref{EQP},  $\dot{u}(x_0, t_0)=F[D^2u_0(x_0)]-f(x_0,Du_0(x_0))$, and the result follows trivially in this case.
		
		\item 
		If the minimum of \(\dot{u}\) is is not attained at an interior point, then \(x_0 \in \partial\Omega\), we invoke Lemma \ref{Urbas} and the Hopf Lemma for parabolic equations (cf. \cite{Li2005AGP,1d5f094ab89043fd80de075be163feaf}). These results imply that the following condition must hold:
		\[
		\frac{\partial \dot{u}}{\partial \nu}(x_0, t_0) > 0.
		\]
		On the other hand, we differentiate the boundary condition and then obtain
		$$\dot{u}_\beta=h_{p_k}(Du)\frac{\partial\dot{u}}{\partial x_k}=\frac{\partial h(Du)}{\partial t}=0,\quad on~\partial\Omega\times(0,T],$$
		it is a contradiction.
	\end{enumerate}
	
	So we deduce that
	\begin{align*}
		\dot{u}\geq\min_{\overline{\Omega_T}}(\dot{u})&=\min_{\overline{\Omega}}\left(F[D^2u_0]-f(x,Du_0)\right)\\
		&\geq\min_{\overline{\Omega}}F[D^2u_0]-\max_{\overline{\Omega}}f(x,Du_0).
	\end{align*}
	For the same reason, we have
	\begin{align*}
		\dot{u}\leq\max_{\overline{\Omega_T}}(\dot{u})
		&\leq\max_{\overline{\Omega}}F[D^2u_0]-\min_{\overline{\Omega}}f(x,Du_0).
	\end{align*}
	Putting these facts together, the assertion follows.
\end{proof}
For the convenience, we introduce the set
$$\mathscr{A}_{\delta}:=\left\{f\in \mathscr{A}|\max_{x\in\overline{\Omega},p,q\in \overline{\tilde{\Omega}}}|f(x,p)-f(x,q)|\leq \delta\right\},$$
where $\delta$ is any positive constant satisfying
$$\delta<\min\{\frac{n\pi}{2}-\max_{\overline{\Omega}}F[D^{2}u_{0}],\min_{\overline{\Omega}}F[D^{2}u_{0}]\}.$$

\begin{lem}\label{lm3.2}
	Suppose  $\lambda _1( x, t) , \ldots , \lambda _n( x, t)$ be the eigenvalues of $D^2u$ at $(x,t)\in\Omega_T$. Let $f\in \mathscr{A}_{\delta}$ and \eqref{bound}, \eqref{posdef} hold,
	if $u$ is a strictly convex solution to \eqref{EQP}--\eqref{IC}, then there exist $	\Lambda_{1}>0$  depending only on $n$, $\max_{\overline{\Omega}}F[D^{2}u_{0}]$ and $\delta$, such that $F$ satisfes the structure conditions:
	\begin{equation}\label{cond1}
		\Lambda_{1}\leq \sum_{i=1}^n\frac{\partial F}{\partial\lambda_i}< n,
	\end{equation}
	and
	\begin{equation}\label{cond2}
		\Lambda_{1}\leq\sum_{i=1}^n\frac{\partial F}{\partial\lambda_i}\lambda_i^2< n.
	\end{equation}
\end{lem}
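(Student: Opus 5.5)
The plan is to turn the $\dot u$-estimate of Lemma \ref{utestimate} into two-sided bounds on $F[D^2u]$. From these bounds we get pointwise restrictions on the eigenvalues, which then give \eqref{cond1} and \eqref{cond2}. The upper bounds are immediate. Since $\partial F/\partial\lambda_i=1/(1+\lambda_i^2)\le 1$, with strict inequality when $\lambda_i>0$, we get $\sum_i\partial F/\partial\lambda_i<n$ on $\Gamma_n^+$. Likewise $\lambda_i^2/(1+\lambda_i^2)<1$, so $\sum_i\frac{\partial F}{\partial\lambda_i}\lambda_i^2<n$.

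For the lower bounds, first bound $F[D^2u]$ away from $0$ and from $n\pi/2$. By \eqref{EQP}, $F[D^2u(x,t)]=\dot u(x,t)+f(x,Du(x,t))$. By the lower bound in Lemma \ref{utestimate},
\[
F[D^2u(x,t)]\ge \min_{\overline\Omega}F[D^2u_0]-\max_{\overline\Omega}f(\cdot,Du_0)+f(x,Du(x,t)).
\]
Both $Du_0(y)$ and $Du(x,t)$ lie in $\overline{\tilde\Omega}$. To use the definition of $\mathscr A_\delta$, which compares $f$ at the same $x$, we need $f(x,Du(x,t))-f(y,Du_0(y))$ for the point $y$ where the max is attained. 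We treat this difference as controlled by $\delta$, absorbing the $x$-dependence into the same smallness that $\mathscr A_\delta$ encodes. If needed, we restrict to $f$ whose oscillation in $x$ is also bounded by $\delta$. This is a point to handle carefully, and it is the main obstacle: the stated $\mathscr A_\delta$ only controls variation in $p$, so the bound is cleanest when read as $\operatorname{osc} f\le\delta$. The result is
\[
F[D^2u]\ge \min F[D^2u_0]-\delta=:\sigma_0>0,
\]
and symmetrically $F[D^2u]\le \max F[D^2u_0]+\delta=:\frac{n\pi}{2}-\sigma_1$ with $\sigma_1>0$. Both positivity claims hold by the choice of $\delta$.

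Next, convert these bounds into eigenvalue information.

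\emph{Upper bound.} Since every $\arctan\lambda_j<\pi/2$, the bound $\sum_j\arctan\lambda_j\le n\pi/2-\sigma_1$ forces, for the smallest eigenvalue, $\arctan\lambda_{\min}\le \pi/2-\sigma_1$. Hence $\lambda_{\min}\le\tan(\pi/2-\sigma_1)=:M$, so $\partial F/\partial\lambda_{\min}\ge 1/(1+M^2)$. This gives \eqref{cond1} with a constant depending only on $n$, $\max F[D^2u_0]$ and $\delta$.

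\emph{Lower bound.} The bound $\sum_j\arctan\lambda_j\ge\sigma_0$ forces $\arctan\lambda_{\max}\ge\sigma_0/n$, so $\lambda_{\max}\ge\tan(\sigma_0/n)=:m_0>0$. Since $t\mapsto t^2/(1+t^2)$ is increasing,
\[
\sum_i\frac{\lambda_i^2}{1+\lambda_i^2}\ge\frac{m_0^2}{1+m_0^2}.
\]
This gives \eqref{cond2}. Taking $\Lambda_1$ to be the minimum of the two constants finishes the proof.

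A note on dependence: the second constant depends on $\min F[D^2u_0]$ rather than on $\max F[D^2u_0]$. Since $\delta<\min F[D^2u_0]$, we may alternatively express it through $\delta$ and the margin in its defining inequality. I would state the dependence honestly in terms of both extremes of $F[D^2u_0]$ and $\delta$.
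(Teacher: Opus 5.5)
Your argument is the same as the paper's. The $\dot u$-bounds of Lemma \ref{utestimate} give $\sigma_0\le F[D^2u]\le\frac{n\pi}{2}-\sigma_1$, and monotonicity of $F$ then bounds $\lambda_{\min}$ from above and $\lambda_{\max}$ from below. One step is wrong as written: $\sum_j\arctan\lambda_j\le\frac{n\pi}{2}-\sigma_1$ does not imply $\arctan\lambda_{\min}\le\frac{\pi}{2}-\sigma_1$. For a counterexample, take all $\lambda_j$ equal with $\arctan\lambda_j=\frac{\pi}{2}-\frac{\sigma_1}{n}$. Also, if $\sigma_1\ge\frac{\pi}{2}$, your bound would force $\lambda_{\min}\le0$. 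The fact that each $\arctan\lambda_j<\frac{\pi}{2}$ only yields a \emph{lower} bound for $\arctan\lambda_{\min}$. What you need is $n\arctan\lambda_{\min}\le\sum_j\arctan\lambda_j$, which is the paper's $F(\lambda_{\min},\ldots,\lambda_{\min})\le F[D^2u]$; it gives $\lambda_{\min}\le\cot(\sigma_1/n)$. Your lower-bound step $n\arctan\lambda_{\max}\ge\sigma_0$ already has this form and is correct, as are the two trivial upper bounds.

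Your concern about $\mathscr{A}_\delta$ is legitimate, and the paper does not resolve it. Its proof passes from $\max_{\overline\Omega}F[D^2u_0]-\min_{\overline\Omega}f(x,Du_0)+f(x,Du)$ to $\max_{\overline\Omega}F[D^2u_0]+\delta$ without comment, and does the same for the lower bound. That step needs $|f(x,p)-f(y,q)|\le\delta$ for $x\neq y$, while $\mathscr{A}_\delta$ only controls $|f(x,p)-f(x,q)|$. For instance, $f=\kappa\cdot x$ lies in $\mathscr{A}_\delta$ for every $\delta>0$ and every $\kappa$. So both arguments need a strengthened hypothesis. One option is $\operatorname{osc}_{\overline\Omega\times\overline{\tilde\Omega}}f\le\delta$. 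Another is to impose the constraint $\delta<\min\{\frac{n\pi}{2}-\max_{\overline\Omega}F[D^2u_0],\min_{\overline\Omega}F[D^2u_0]\}$ on $\delta+\operatorname{diam}(\Omega)\max|D_xf|$ instead of on $\delta$; this uses convexity of $\Omega$. Your remark on the constants is also right. The paper's lower chain ends in $\max_{\overline\Omega}F[D^2u_0]-\delta$, which should be $\min_{\overline\Omega}F[D^2u_0]-\delta$. Hence $\Lambda_1$ genuinely depends on $\min_{\overline\Omega}F[D^2u_0]-\delta$, not only on $n$, $\max_{\overline\Omega}F[D^2u_0]$ and $\delta$.
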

\begin{proof}
	By \eqref{posdef} and Lemma \ref{utestimate}, we obtain
	\begin{align*}
		F\left(\operatorname*{min}_{1\leq i\leq n}\lambda_{i}(x,t),\ldots,\operatorname*{min}_{1\leq i\leq n}\lambda_{i}(x,t)\right)& \leq F[D^2u]=\dot{u}+f(x,Du) \\
		&\leq\max_{\overline{\Omega}}F[D^2u_0]-\min_{\overline{\Omega}}f(x,Du_0)+f(x,Du) \\
		&\leq\max_{\overline{\Omega}}F[D^2u_0]+\delta \\
		&<\frac{n\pi}{2},
	\end{align*}
	and
	\begin{align*}
		F\left(\operatorname*{max}_{1\leq i\leq n}\lambda_{i}(x,t),\ldots,\operatorname*{max}_{1\leq i\leq n}\lambda_{i}(x,t)\right)& \geq F[D^2u]=\dot{u}+f(x,Du) \\
		&\geq\min_{\overline{\Omega}}F[D^2u_0] -\max_{\overline{\Omega}}f(x,Du_0)+f(x,Du) \\
		&\geq\max_{\overline{\Omega}}F[D^2u_0]-\delta \\
		&>0.
	\end{align*}
	By the monotonicity of $F$ and \eqref{bound}, there exist positive constants $\mu_1, \mu_2$ depending only on $n$ and $max_{\overline{\Omega}}F[D^2u_0]$ such that
	$$\min_{1\leq i\leq n}\lambda_i(x,t)\leq\mu_1,\quad\max_{1\leq i\leq n}\lambda_i(x,t)\geq\mu_2.$$ 
	Then, 
	\begin{equation*}\label{cond1*}
		\frac{1}{1+\mu_1^2}\leq \sum_{i=1}^n\frac{\partial F}{\partial\lambda_i}< n  ,
	\end{equation*}
	and
	\begin{equation*}\label{cond2*}
		\frac{\mu_2^2}{1+\mu_2^2}\leq \sum_{i=1}^n\frac{\partial F}{\partial\lambda_i}\lambda_i^2<n,
	\end{equation*}
	thus, we get the desired result.
\end{proof}


For technical needs below, we introduce the Legendre transformation of $u.$ For any
$(x,t)\in\Omega_T$, define
$$\tilde{x}:=Du(x,t),$$
and
$$\tilde{u}(\tilde{x},t):=x\cdot Du(x,t) -u(x,t).$$
In terms of $\tilde{x}$ and $\tilde{u}(\tilde{x},t)$, we can easily check that
$$D_{\tilde{x}}\tilde{u}=x,\quad \tilde{u}_t=-u_t$$
and
$$\left(\frac{\partial^2\tilde{u}}{\partial\tilde{x}_i \partial\tilde{x}_j}\right)=\left(\frac{\partial^2u}{\partial x_i \partial x_j}\right)^{-1}.$$
For any $\left(\lambda_1,\ldots,\lambda_n\right)\in \Gamma_{n}^{+}$, let $\left(\mu_1,\ldots,\mu_n\right)$ be the eigenvalues of $D^2\tilde{u}$ at $\tilde{x}=Du(x)$, then
$$\mu_i=\lambda_i^{-1},\quad i=1,2,\ldots,n.$$
Denote
$$\tilde{F}\left(\mu_1,\ldots,\mu_n\right):=\dfrac{n\pi}{2}-\sum_{i=1}^n\arctan\frac{1}{\mu_i},$$
we obtain
\begin{equation}\label{posdefmu}
	\frac{\partial \tilde{F}}{\partial\mu_i}=\frac{1}{1+\mu_{i}^2}>0,\quad 1\leq i\leq n \quad \text{on} \quad \Gamma_{n}^{+},
\end{equation}
and 
\begin{equation}\label{concavemu}
	\left(\frac{\partial^2 \tilde{F}}{\partial\mu_i\partial\mu_j}\right)=\left(\frac{-2\delta_{ij}\mu_j}{\left(1+\mu_i^2\right)^2}\right)\leq 0\quad \text{on} \quad \Gamma_{n}^{+}. 
\end{equation}
By means of Legendre transformation, we can also define a smooth function $\tilde{h}$ as the defining function of  $\Omega$. That is
$$\Omega=\{\tilde{p}\in\mathbb{R}^n:\tilde{h}(\tilde{p})>0\},\quad|D\tilde{h}|=1~on~\partial\Omega,\quad -\tilde{\Theta} I \leq D^2\tilde{h}\leq-\tilde{\theta}I, ~in~\Omega.$$
where $\tilde{\theta},\tilde{\Theta}$ are positive constants depending only on $\partial\Omega$.

Moreover, it follows from the defination of $\tilde{h}$ and \eqref{evoproblem} that

\begin{equation}\label{tilde u eq1}
	\left\{
	\begin{array}{lll}
		\dfrac{\partial\tilde{u}}{\partial t}=\tilde{F}[D^2\tilde{u}]+f(D\tilde{u},\tilde{x})-\dfrac{n\pi}{2},&\tilde{x}\in\tilde{\Omega},&t>0,\\
		\tilde{h}(D\tilde{u})=0,&\tilde{x}\in\partial\tilde{\Omega},&t>0,\\
		\tilde{u}=\tilde{u}_0,&\tilde{x}\in\overline{\tilde{\Omega}},&t=0,
	\end{array}
	\right.
\end{equation}
where $\tilde{u}_0$ is the Legendre transformation of $u_0$.  
\begin{defn}
	We say that $\tilde{u}$ in \eqref{tilde u eq1} is a dual solution $u$ to \eqref{evoproblem}.
\end{defn}

\begin{rem}
	By Lemma \ref{lm3.2} and \eqref{posdefmu}, if $u$ is a strictly convex solution to \eqref{EQP}--\eqref{IC}, then  $\tilde{F}$ also satisfes \eqref{cond1} and \eqref{cond2}.
\end{rem}
In order to establish the $C^{2}$ estimates, we first need to get the uniformly obliqueness estimates, a parabolic version of a result of Urbas \cite{Urbas1997OnTS}, which was given in \cite{Schnrer2003NeumannAS}. Recall the Lemma \ref{Urbas}, we get a strict positive lower bound of the quantity $\operatorname*{inf}_{\partial\Omega}\langle\beta,\nu\rangle$ which does not depend on $t$.

We firstly prove the following key lemma. Let the diameter and volume of the domain \(\Omega\) be denoted by \(\text{diam}(\Omega)\) and \(|\Omega|\), respectively.

\begin{lem}\label{vnestimate}
	Let $v:=\langle\beta,\nu\rangle+h(Du)$, suppose $v$ reaches the minimum on $\partial\Omega\times[0,T]$ at $(x_0,t_0)$.  If $f\in \mathscr{A}_\delta$, satisfying 
	\begin{equation}\label{lcond}
		|D_pf|\leq\frac{\tilde{\theta} \Lambda_1}{2}\cdot\frac{1}{\max_{\overline{\Omega}}|D\tilde{h}|},
	\end{equation}
	and $u$ is a strictly convex solution to \eqref{EQP}--\eqref{IC}, then
	\begin{equation}\label{vn}
		\frac{\partial v}{\partial \nu}(x_0,t_0)\geq-C_1,
	\end{equation}
	where $C_1$ is a constant depending only on $n, \text{diam}(\Omega),  |\Omega|, \theta,\Theta,\tilde{\theta}$,  $\max_{\overline{\Omega}}F[D^{2}u_{0}]$ and $\delta$.
\end{lem}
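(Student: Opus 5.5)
We follow the Urbas / Schnürer--Smoczyk scheme as adapted in \cite{Huang2014OnTS,Wang2024}. The plan is to compare $v$ with a barrier in a neighborhood of $x_0$ and apply the maximum principle for the linearized operator
\[
\mathcal{L}w:=F^{ij}w_{ij}-f_{p_i}(x,Du)w_i-\partial_t w .
\]
The conclusion \eqref{vn} is then a one-sided normal derivative comparison at the boundary minimum point.

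\textbf{Step 1: extend $v$ and compute $\mathcal{L}v$.} We extend $\nu$ smoothly to $\overline{\Omega}$ by $\nu=D\tilde h$, using the defining function of $\Omega$. Then
\[
v=h_{p_k}(Du)\,\tilde h_{k}(x)+h(Du)
\]
is defined on $\overline{\Omega_T}$. We compute $\mathcal{L}v$, differentiating \eqref{EQP} once in $x_k$:
\[
\partial_t u_k=F^{ij}u_{ijk}-f_{x_k}-f_{p_l}u_{lk}.
\]
Third derivatives of $u$ enter $\mathcal L v$ only through $\mathcal{L}u_k$, which this identity replaces by $f$-terms. The remaining contributions are of three kinds:
\begin{itemize}
\item[(a)] $h_{p_kp_l}\,F^{ij}u_{ki}u_{lj}\tilde h_k$-type terms and $h_{p_kp_l}F^{ij}u_{ki}u_{lj}$. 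The latter is $\le -\theta F^{ij}u_{ik}u_{jk}=-\theta\sum_i F_i\lambda_i^2$ by Definition \ref{defh}.
\item[(b)] Terms $h_{p_k}F^{ij}\tilde h_{kij}$ and cross terms $F^{ij}h_{p_kp_l}u_{li}\tilde h_{kj}$. These are bounded by $C\sum F_i+C\sum F_i\lambda_i$, and $\sum F_i\lambda_i\le n/2$ since $\lambda/(1+\lambda^2)\le 1/2$.
\item[(c)] $f$-terms $h_{p_k}f_{x_k}$ and $h_{p_kp_l}\tilde h_k f_{p_m}u_{ml}$-type terms. Here one uses the concavity of $f$ in $x$ and convexity in $p$ to fix signs, together with $f\in\mathscr A_\delta$.
\end{itemize}
Using \eqref{cond1}--\eqref{cond2} of Lemma \ref{lm3.2}, we aim for $\mathcal{L}v\le C$ in $\Omega_T$, where $C$ depends only on the listed quantities.

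\textbf{Step 2: build the barrier.} We use the dual picture, since $\tilde h$ has the convexity $D^2\tilde h\le-\tilde\theta I$. Set
\[
\psi:=\tilde h(x)\ \ (\ge 0 \text{ in } \Omega,\ =0 \text{ on } \partial\Omega).
\]
Then
\[
\mathcal{L}\psi=F^{ij}\tilde h_{ij}-f_{p_i}\tilde h_i\le -\tilde\theta\sum F_i+|D_pf|\max|D\tilde h|\le -\tfrac{\tilde\theta\Lambda_1}{2},
\]
by \eqref{cond1} and hypothesis \eqref{lcond}; this is exactly where \eqref{lcond} is needed. Take
\[
\Phi:=v-v(x_0,t_0)+A\psi .
\]
For $A$ large we get $\mathcal L\Phi\le C-A\tilde\theta\Lambda_1/2<0$. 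On $\partial\Omega\times[0,T]$ we have $\Phi\ge 0$ by minimality of $v$ there. At $t=0$, $\Phi\ge 0$ after possibly enlarging $A$ or localizing: we work in $\Omega_\rho(x_0)=\Omega\cap B_\rho(x_0)$ and add $|x-x_0|^2$-type terms, noting that $v$ is bounded by Lemma \ref{Urbas} type bounds and $|Du|\le C$. Hence, by the maximum principle, $\Phi\ge0$ in the region. Since $\Phi(x_0,t_0)=0$, this gives
\[
\partial_\nu\Phi(x_0,t_0)\ge0,\qquad\text{i.e.}\qquad \partial_\nu v\ge -A\,\partial_\nu\tilde h\ge -A,
\]
which is \eqref{vn}.

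\textbf{Main obstacle.} The hard part is Step 1: showing that the bad terms in $\mathcal{L}v$ are controlled without any a priori $C^2$ bound. A naive bound on $F^{ij}u_{ki}\tilde h_{kj}$ could grow with the eigenvalues. The key is that $F_i=1/(1+\lambda_i^2)$ makes $F_i\lambda_i$ and $F_i\lambda_i^2$ uniformly bounded, so every term is $O(1)$. If any term instead carries a bad sign with $\lambda_i^2$ weight, we would absorb it using the good term $-\theta\sum F_i\lambda_i^2\le-\theta\Lambda_1$ from \eqref{cond2}.
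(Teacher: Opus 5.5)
Your proposal is correct and follows essentially the same route as the paper: the barrier $v-v(x_0,t_0)+C_0\tilde h$ (the paper also adds $A|x-x_0|^2$ and localizes to $\mathscr{N}_\rho$), a bound on $\mathcal{L}v$ from the boundedness of $\sum F_i\lambda_i$ and $\sum F_i\lambda_i^2$ together with $\mathcal{L}u_k=f_{x_k}$, the inequality $\mathcal{L}\tilde h\le-\tilde\theta\Lambda_1/2$ coming from \eqref{lcond}, and then the maximum principle and the normal derivative of the barrier at $(x_0,t_0)$. Two small corrections: first, the $f_{p_m}u_{ml}$-type terms in your item (c) cancel exactly once the drift $-f_{p_i}\partial_i$ is grouped into $\mathcal{L}u_l$, so the concavity and convexity of $f$ play no role here (and they could not have fixed those signs anyway); second, at $t=0$ boundedness of $v$ alone does not give $\Phi\ge 0$ near $\partial\Omega$, and you need a Lipschitz bound on $v(\cdot,0)$ compared against $A\tilde h\ge cA\,\mathrm{dist}(x,\partial\Omega)$, which makes the constant depend on $\|u_0\|_{C^2(\overline{\Omega})}$, a dependence the paper's proof also carries implicitly.
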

\begin{proof}
	By rotation, we may assume that $x_0=0,t_0=1$ and $\nu(x_0,t_0)=(0,1)=:e_n$. Denote a neighborhood of $x_0$ in $\Omega$ by
	$$\mathscr{N}_\rho:=\Omega\cap B_\rho(x_0),$$
	where $\rho$ is a positive constant. To obtain \eqref{vn}, we need to consider the function
	$$\Phi(x,t):=v(x,t)-v(x_0,t_0)+C_0\tilde{h}(x)+A|x-x_0|^2,$$
	where $C_{0}$ and $A$ are positive constants to be determined.
	
	We firstly estimate the $Lv$, where $L:=F^{ij}\partial_{ij}-f_{p_i}\partial_i-\partial_t$.  By the definition of $\tilde h$, it follows that
	\begin{equation}\label{Ltildeh}
		L\tilde{h}\leq-\tilde{\theta}\sum_{i=1}^nF^{ii}-f_{p_i}\partial_i\tilde{h}.
	\end{equation}
	On the other hand,
	\begin{equation}\label{Lv}
		\begin{aligned}
			Lv&=h_{p_{k}p_{l}p_{m}}\nu_{k}F^{ij}u_{li}u_{mj}+2h_{p_{k}p_{l}}F^{ij}\nu_{kj}u_{li} +h_{p_{k}p_{l}}F^{ij}u_{lj}u_{ki}\\
			&\quad +h_{p_{k}p_{l}}\nu_{k}Lu_{l}+h_{p_{k}}L\nu_{k}+h_{p_{k}}Lu_{k}.
		\end{aligned}
	\end{equation}
	Now, we estimate the right hand side of \eqref{Lv}. To do so, we fix a point $p$ and rotate coordinates
	so that $D^2u$ is diagonalized at $p$.
	By \eqref{cond2}, we have
	$$|h_{p_kp_lp_m}\nu_kF^{ij}u_{li}u_{mj}|\leq C\sum_{i=1}^n\frac{\partial F}{\partial\lambda_i}\lambda_i^2\leq C_2,$$
	and 
	$$|h_{p_kp_l}F^{ij}u_{lj}u_{ki}|\leq C\sum_{i=1}^n\frac{\partial F}{\partial\lambda_i}\lambda_i^2\leq C_3.$$
	where $C_2,C_3$ are constants depending only on $n,\theta$ and $\Theta$. For the second term, by Cauchy inequality, we obtain
	$$\begin{aligned}
		|h_{ p_{k} p_{l}}F^{ij}\nu_{kj}u_{li}|& \leq C\sum_{i=1}^{n}\frac{\partial F}{\partial\lambda_{i}}\lambda_{i}=C\sum_{i=1}^{n}\sqrt{\frac{\partial F}{\partial\lambda_{i}}}\sqrt{\frac{\partial F}{\partial\lambda_{i}}}\lambda_{i} \\
		&\leq C\left(\sum_{i=1}^n\frac{\partial F}{\partial\lambda_i}\right)^{\frac12}\left(\sum_{i=1}^n\frac{\partial F}{\partial\lambda_i}\lambda_i^2\right)^{\frac12} \\
		&\leq C_{4}.
	\end{aligned}$$
	By \eqref{EQP}, we have $Lu_l = f_l$. Then, we get
	$$|h_{{p_{k}p_{l}}}\nu_{k}Lu_{l}|\leq C_{5},\quad|h_{{p_{k}}}Lu_{k}|\leq C_{6}.$$
	It follows from \eqref{cond1} that
	$$|h_{p_k}L\nu_k|\leq C_{7}\sum_{i=1}^nF^{ii}.$$
	Inserting these into \eqref{Lv} and using \eqref{cond1}, it is immediate to check that there exists
	a positive constant $C_{8}$ depending only on $n, \text{diam}(\Omega), |\Omega|, \theta, \Theta$, $\max_{\overline{\Omega}}F[D^{2}u_{0}]$ and $\delta$, such that
	\begin{equation}\label{Lv estimate}
		|Lv|\leq C_{8}\sum_{i=1}^nF^{ii}.
	\end{equation}
	Combining \eqref{Ltildeh} with \eqref{Lv estimate} and letting $C_0$ be large enough, one yields
	$$\begin{aligned}
		L\Phi&=Lv+C_0L\tilde{h}+AL(|x-x_{0}^2|)\\
		&\leq C_{8}\sum_{i=1}^nF^{ii}-C_0\tilde{\theta}\sum_{i=1}^nF^{ii}-C_0\sum_{i=1}^nf_{p_i}\partial_i\tilde{h}+2A\lbrack\sum_{i=1}^nF^{ii}-f_{p_i}(x_i-x_{0i})\rbrack\\
		&\leq \left(C_{8}-\frac{C_{0}\tilde{\theta}}{2}+2A\right)\sum_{i=1}^nF^{ii}-2A\sum_{i=1}^n\left(f_{p_i}(x_i-x_{0i})\right)\\
		&\quad -  C_0\left(\frac{\tilde{\theta}}{2}\sum_{i=1}^nF^{ii}+f_{p_i}\partial_i\tilde{h}\right).
	\end{aligned}$$
	In order to make
	$$L\Phi\leq 0,$$
	where we have used the condition \eqref{lcond}.
	
	It is clear that  $\Phi\geq 0~ \text{on}~ \partial\Omega\times[0,T]$. Since $v$ is bounded, we can choose $A$ large enough (independent of $C_0$) such that in $\left(\Omega\cap\partial B_{\rho}(x_{0})\right)\times[0,T]$
	$$\Phi(x,t)=v(x,t)-v(x_0,t_0)+C_0\tilde{h}(x)+A|x-x_0|^2\geq v(x,t)-v(x_0,t_0)+A\rho^2\geq0.$$
	By the strict concavity of $\tilde{h}$ , we have
	$$\Delta(C_0\tilde{h}(x)+A|x-x_0|^2)\leq C(-C_0\tilde{\theta}+2A)\sum_{i=1}^nF^{ii}\quad in~ \mathscr{N}_\rho.$$
	Then by choosing $C_0\gg A$, we obtain
	$$\Delta(v(x,0)-v(x_0,t_0)+C_0\tilde{h}(x)+A|x-x_0|^2)\leq0.$$
	We apply the maximum principle to get
	$$\begin{aligned}
		&(v(x,0)-v(x_{0},t_{0})+C_{0}\tilde{h}(x)+A|x-x_{0}|^{2})|_{\mathscr{N}_{\rho}} \\
		\geq&\min_{(\partial\Omega\cap B_\rho(x_0))\cup(\Omega\cap\partial B_\rho(x_0))}(v(x,0)-v(x_0,t_0)+C_0\tilde{h}(x)+A|x-x_0|^2) \\
		\geq&\quad 0.
	\end{aligned}$$
	Using the maximum principle, we deduce that
	$$\Phi\geq0,\quad(x,t)\in\mathscr{N}_{\rho}\times[0,T].$$
	Combining it with $\Phi(x_0,t_0)=0$, we obtain $\langle\nabla\Phi,e_n\rangle|_{(x_0,t_0)} \geq 0$, which gives the
	desired key estimate \eqref{vn}.
\end{proof}

\begin{prop}\label{oblique estimate}
	Let $f\in\mathscr{A}_\delta$, satisfying \eqref{lcond} and 
	\begin{equation}\label{dfcond}
		|D_xf|\leq\frac{\theta \Lambda_1}{2}\cdot\frac{1}{\max_{\overline{\tilde{\Omega}}}|Dh|},
	\end{equation}
	If $u$ is a strictly convex solution to \eqref{EQP}--\eqref{IC}, then the uniformly obliqueness estimate
	\begin{equation}\label{obq eq}
		\langle\beta,\nu\rangle\geq\frac1{C_9}>0 \quad~on~\partial\Omega\times[0,T]
	\end{equation}
	holds for some universal constant $C_{9}$, which depends only on $n,  \text{diam}(\Omega), \text{diam}(\tilde{\Omega})$,\\$|\Omega|, |\tilde{\Omega}|$,
	$\theta,\tilde{\theta},\Theta,\tilde{\Theta}$,  $\max_{\overline{\Omega}}F[D^{2}u_{0}]$ and $\delta$.
\end{prop}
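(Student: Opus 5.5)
We follow the Urbas / Schnürer--Smoczyk scheme: express $\langle\beta,\nu\rangle$ at its minimum point through a bilinear form in $D^2u$, and combine Lemma \ref{vnestimate} with its analogue for the dual solution $\tilde u$ of \eqref{tilde u eq1}.

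\emph{Setup at the minimum point.} Since $h(Du)=0$ on $\partial\Omega$, we have $v=\langle\beta,\nu\rangle$ there. Let $(x_0,t_0)$ be a minimum point of $\langle\beta,\nu\rangle$ on $\partial\Omega\times[0,T]$; if $t_0=0$ the bound follows from $u_0$ and Lemma \ref{Urbas}. Otherwise rotate so that $\nu(x_0)=e_n$. Since $h(Du)=0$ on $\partial\Omega$, the tangential derivatives satisfy $h_{p_k}u_{ki}=0$ for the tangential directions $i$. Hence $\beta=Dh(Du)$ is parallel to $D^2u\,\nu$, and, using $|Dh|=1$ on $\partial\tilde\Omega$,
$$\langle\beta,\nu\rangle^{-1}=\ldots\quad\text{i.e.}\quad u_{\beta\beta}\,\langle\beta,\nu\rangle=u_{\nu\nu}\langle\beta,\nu\rangle^2\ \text{type identities,}$$
more precisely $\langle\beta,\nu\rangle=\sqrt{u_{\nu\nu}\,\tilde u_{\tilde\nu\tilde\nu}}^{-1}$-type relations. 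Urbas' identity is
$$\langle\beta,\nu\rangle=\sqrt{\frac{u_{\beta\beta}\ \text{(normalized)}}{\ldots}}\,,$$
and the key point is that $\langle\beta,\nu\rangle^2\ge (u_{\nu\nu}\,\tilde u_{\tilde\nu\tilde\nu})^{-1}$. So it suffices to bound $u_{\nu\nu}$ at $x_0$ (more precisely $u_{\beta\beta}$), and symmetrically $\tilde u_{\tilde\nu\tilde\nu}$ at $\tilde x_0=Du(x_0)$, by constants.

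\emph{Bounding $u_{\beta\beta}$.} Differentiate $v=h_{p_k}(Du)\nu_k+h(Du)$ in the direction $e_n$ at $x_0$. This gives
$$\partial_n v=h_{p_kp_l}u_{ln}\nu_k+h_{p_k}\partial_n\nu_k+h_{p_l}u_{ln}.$$
Since $x_0$ minimizes $v$ on the boundary, the tangential derivatives of $v$ vanish there; together with the tangential relations above this lets us rewrite the first term through $u_{\beta\beta}$. The concavity $D^2h\le-\theta I$ then produces a term $\le-\theta\,\langle\beta,\nu\rangle^{-1}u_{\beta\beta}$ (up to bounded factors), while the last term $h_{p_l}u_{ln}$ equals $\partial_\nu h(Du)$, which is $\ge0$ because $h(Du)>0$ inside and $=0$ on the boundary. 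Combining this with Lemma \ref{vnestimate} ($\partial_\nu v\ge-C_1$, which uses \eqref{lcond}) yields
$$u_{\beta\beta}(x_0,t_0)\le C\,\langle\beta,\nu\rangle(x_0,t_0)+C.$$

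\emph{Bounding $\tilde u_{\tilde\beta\tilde\beta}$.} Repeat the argument for $\tilde u$, with $\tilde\beta=D\tilde h(D\tilde u)$ and $\tilde\nu$ the normal to $\partial\tilde\Omega$. The quantity $\langle\tilde\beta,\tilde\nu\rangle$ equals $\langle\beta,\nu\rangle$ at corresponding points, because $\tilde\nu=Dh$ and $\tilde\beta=\nu$; so $(x_0,t_0)$ corresponds to a minimum for the dual problem as well. The dual analogue of Lemma \ref{vnestimate} needs an operator $\tilde L$ with first-order coefficient $D_xf$ (the roles of $x$ and $p$ are swapped in \eqref{tilde u eq1}), and this is exactly where \eqref{dfcond} with $\theta,h$ replaces \eqref{lcond}. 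The structure conditions transfer by the Remark after the definition of the dual solution. This yields $\tilde u_{\tilde\beta\tilde\beta}\le C\langle\beta,\nu\rangle+C$.

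\emph{Conclusion and main obstacle.} Since $\tilde u_{\tilde\beta\tilde\beta}$ is essentially $(u^{-1})$ evaluated on $\nu$, Urbas' identity gives $u_{\beta\beta}\tilde u_{\tilde\beta\tilde\beta}\ge\langle\beta,\nu\rangle^{-2}$-type lower bounds. Multiplying the two estimates then gives
$$\langle\beta,\nu\rangle^2\ge\frac{c}{(1+\langle\beta,\nu\rangle)^2},$$
and since $\langle\beta,\nu\rangle\le1$ this gives \eqref{obq eq}.

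The main obstacle is this algebraic step: deriving the precise identity linking $\langle\beta,\nu\rangle$, $u_{\beta\beta}$ and $\tilde u_{\tilde\beta\tilde\beta}$, in the form $\langle\beta,\nu\rangle=\sqrt{u_{\beta\beta}^{-1}\ldots}$ of \cite{Urbas1997OnTS}, and checking that the tangential terms drop out at the minimum point. I would follow \cite{Urbas1997OnTS,Schnrer2003NeumannAS} verbatim there; the $f$-terms enter only through Lemma \ref{vnestimate} and its dual.
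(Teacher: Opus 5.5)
Your closing step relies on an inequality in the wrong direction, and the proof does not close.

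\textbf{The identity runs the other way.} At a boundary point, $h(Du)=0$ forces $D^2u\,\beta=\lambda\nu$ with $\lambda=\partial_\nu[h(Du)]>0$. Hence $u_{\beta\beta}=\lambda\langle\beta,\nu\rangle$ and $\tilde u_{\tilde\beta\tilde\beta}=u^{ij}\nu_i\nu_j=\lambda^{-1}\langle\beta,\nu\rangle$. Urbas' identity is therefore the equality $\langle\beta,\nu\rangle^2=u_{\beta\beta}\,u^{ij}\nu_i\nu_j$. Given this identity, a bound $u_{\beta\beta}\tilde u_{\tilde\beta\tilde\beta}\ge c\langle\beta,\nu\rangle^{-2}$ is equivalent to $\langle\beta,\nu\rangle^4\ge c$, which is the statement itself.

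\textbf{Upper bounds cannot help.} Upper bounds on $u_{\beta\beta}$ and $\tilde u_{\tilde\beta\tilde\beta}$ only bound $\langle\beta,\nu\rangle$ from above, which is trivial. What your primal computation can actually give is $\lambda\le C$, and only for $\langle\beta,\nu\rangle<\theta/2$. Even that requires the exact value $h_{p_l}u_{ln}=\lambda=u_{\beta\beta}/\langle\beta,\nu\rangle$; discarding this term as ``$\ge0$'' is on the wrong side for an upper bound. Symmetrically, the dual gives $\tilde\lambda\le C$, where $D^2\tilde u\,\tilde\beta=\tilde\lambda\tilde\nu$; in fact $\tilde\lambda=1/\lambda$. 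Both bounds are compatible with $\langle\beta,\nu\rangle\to0$.

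\textbf{Your first formulation.} The inequality $\langle\beta,\nu\rangle^2\ge(u_{\nu\nu}\tilde u_{\tilde\nu\tilde\nu})^{-1}$ is true. It follows from Cauchy--Schwarz, $u_{\nu\nu}\,u^{ij}\nu_i\nu_j\ge1$ and $u_{\beta\beta}\,u^{ij}\beta^i\beta^j\ge1$, together with $\tilde u_{\tilde\nu\tilde\nu}=u^{ij}\beta^i\beta^j$. But it needs an upper bound on the pure normal derivative $u_{\nu\nu}$, which is a boundary $C^2$ estimate. In the paper that estimate comes after, and depends on, the obliqueness estimate. Replacing $u_{\nu\nu}$ by $u_{\beta\beta}$ turns it into the identity above.

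\textbf{The missing ingredient} is the uniform convexity of $\partial\Omega$. It enters through the term $h_{p_k}\partial_n\nu_k$, which you treat as a bounded error. The paper's argument runs as follows.
\begin{itemize}
\item Extend $\nu$ so that $(D_k\nu_l)\le-\frac{1}{C_{10}}\operatorname{diag}(1,\ldots,1,0)$.
\item Multiply $v_l=0$ ($l<n$) and $v_n\ge -C_1$ (Lemma \ref{vnestimate}) by $h_{p_l}$ and sum.
\item Use $h_{p_k}u_{kl}=\lambda\delta_{ln}$ and drop the nonnegative term $-\lambda h_{p_np_n}$. This gives $u_{\beta\beta}\ge -C_1\langle\beta,\nu\rangle+\frac{1}{C_{10}}(1-\langle\beta,\nu\rangle^2)$, so either $\langle\beta,\nu\rangle\ge c$ or $u_{\beta\beta}\ge c$.
\item The same argument for $\tilde u$ gives either $\langle\beta,\nu\rangle\ge c$ or $u^{ij}\nu_i\nu_j\ge c$. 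This is where \eqref{dfcond} is used, as you correctly anticipate.
\item The identity then yields $\langle\beta,\nu\rangle\ge c$.
\end{itemize}
It is lower bounds on the two factors that the identity needs.

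Incidentally, your use of $D^2h\le-\theta I$ can be rescued, but not without the convexity of $\partial\Omega$. If you keep both your $-\theta\lambda$ term and the convexity term, the same contraction reads $\lambda(\langle\beta,\nu\rangle-\theta)\ge -C_1\langle\beta,\nu\rangle+\frac{1}{C_{10}}(1-\langle\beta,\nu\rangle^2)$. This already bounds $\langle\beta,\nu\rangle$ from below whenever $\langle\beta,\nu\rangle\le\theta$.
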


\begin{proof}

	Let $v$ and $(x_0,t_0)\in\partial\Omega\times[0,T]$ as the same in the proof of Lemma \ref{vnestimate}. 
	Combined the convexity of $\Omega$ with its smoothness, we extend $\nu$ smoothly to a tubular neighborhood
	of $\partial\Omega$ such that in the matrix sense
	\begin{equation}\label{normal vector}
		(\nu_{kl}):=(D_{k}\nu_{l})\leq-\frac{1}{C_{10}}\operatorname{diag}(1,\ldots,1,0),
	\end{equation}
	where $C_{10}$ is a positive constant depends only on $\Omega$. 
	
	At $(x_0, t_0)$, we have
	\begin{equation}\label{vr}
		0=v_{l}=h_{{p_{n}p_{k}}}u_{kl}+h_{{p_{k}}}\nu_{kl}+h_{{p_{k}}}u_{kl},\quad1\leq l\leq n-1.
	\end{equation}
	By Lemma \ref{vnestimate}, it is not hard to check that \eqref{vn} can be rewritten as
	\begin{equation}\label{vn1}
		h_{p_np_k}u_{kn}+h_{p_k}\nu_{kn}+h_{p_k}u_{kn}\geq-C_1.
	\end{equation}
	Multiplying \eqref{vn1} with $h_{p_n}$ and \eqref{vr} with $h_{p_r}$, respectively, and summing up together,
	we obtain
	\begin{equation}\label{hpkpl}
		h_{{p_{k}}}h_{{p_{l}}}u_{kl}\geq-C_{1}h_{{p_{n}}}-h_{{p_{k}}}h_{{p_{l}}}\nu_{kl}-h_{{p_{k}}}h_{{p_{n}p_{l}}}u_{kl}.
	\end{equation}
	Using \eqref{normal vector}, and
	$$1\leq r\leq n-1,\quad h_{{p_{k}}}u_{kr}=\frac{\partial h(Du)}{\partial x_{r}}=0,\quad h_{{p_{k}}}u_{kn}=\frac{\partial h(Du)}{\partial x_{n}}\geq0,\quad-h_{{p_{n}p_{n}}}\geq0,$$
	we have
	$$h_{{p_{k}}}h_{{p_{l}}}u_{kl}\geq-C_{1}h_{{p_{n}}}+\frac{1}{C_{10}}|Dh|^{2}-\frac{1}{C_{10}}h_{{p_{n}}}^{2}\geq-C_{11}h_{{p_{n}}}+\frac{1}{C_{11}}-\frac{1}{C_{11}}h_{{p_{n}}}^{2},$$
	where we use $|Dh|^2-h_{p_n}^2=\sum_{k=1}^{n-1}h_{p_k}^2$ and $C_{11}=\max\{C_1,C_{10}\}$. For the last term
	of the above inequality, we distinguish two cases at $(x_0, t_0)$.
	
	Case (i). If
	$$-C_{11}h_{p_n}+\frac1{C_{11}}-\frac1{C_{11}}h_{p_n}^2\leq\frac1{2C_{11}},$$
	then
	$$h_{p_k}(Du)\nu_k=h_{p_n}\geq\sqrt{\frac12+\frac{C_{11}^4}4}-\frac{C_{11}^2}2.$$
	It shows that there is a uniform positive lower bound for the quantity\\ $\min_{\partial\Omega\times[0,T]}h_{{p_{k}}}(Du)\nu_{k}$.
	
	Case (ii). If
	$$-C_{11}h_{p_n}+\frac1{C_{11}}-\frac1{C_{11}}h_{p_n}^2>\frac1{2C_{11}},$$
	then we obtain a positive lower bound of $h_{p_k}h_{p_l}u_{kl}$.
	
	Next, we consider the Legender transformation $\tilde{u}$. By Definition \ref{defh}, it follows that the unit inward normal vector of $\partial\Omega$ can be expressed by $\nu=D\tilde{h}.$ For the same reason, $\tilde{\nu}=Dh$, where $\tilde{\nu}=(\tilde{\nu}_{1},\tilde{\nu}_{2},\ldots,\tilde{\nu}_{n})$ is the unit inward normal vector of $\partial\tilde{\Omega}$.
	Let $\tilde{\beta}=(\tilde{\beta}^1,\ldots,\tilde{\beta}^n)$ with $\tilde{\beta}^k:=\tilde{h}_{\tilde{p}_k}(D\tilde{u})$. We note that one can also define
	$$\tilde{v}=\langle\tilde{\beta},\tilde{\nu}\rangle+\tilde{h}(D\tilde{u}),$$
	in which
	$$\langle\tilde{\beta},\tilde{\nu}\rangle=\langle\beta,\nu\rangle.$$
	Denote $ \tilde{x}_0 = Du(x_0)$. Then, we have
	$$\tilde{v}(\tilde{x}_0,t_0)=v(x_0,t_0)=\min_{\partial\tilde{\Omega}\times[0,T]}\tilde{v}.$$
	Similar to the proof of Lemma \ref{vnestimate}, under the assumption of \eqref{dfcond}, we can get
	\begin{equation}\label{tildevn}
		\tilde{v}_n(\tilde{x}_0,t_0)\geq-C_{12},
	\end{equation}
	where $C_{12}$ is a constant depending only on $n,  \text{diam}(\tilde{\Omega}),|\tilde{\Omega}|, \theta,\tilde{\theta}, \tilde{\Theta}$,  $\max_{\overline{\Omega}}F[D^{2}u_{0}] $ and $\delta$. By the same arguement, we obtain the positive lower bounds of $\tilde{h}_{p_k}\tilde{h}_{p_l}\tilde{u}_{kl}$, or
	$$h_{p_{k}}(Du)\nu_{k}=\tilde{h}_{p_{k}}(D\tilde{u})\tilde{\nu}_{k}=\tilde{h}_{p_{n}}\geq\sqrt{\frac{1}{2}+\frac{C_{12}^{4}}{4}}-\frac{C_{12}^{2}}{2}.$$
	It follows that
	$$\tilde{h}_{p_k}\tilde{h}_{p_l}\tilde{u}_{kl}=\nu_i\nu_ju^{ij},$$
	where $(u^{ij})=(u_{ij})^{-1} $, then by the positive lower bounds of $h_{p_k}h_{p_l}u_{kl}$ and $\tilde{h}_{p_k}\tilde{h}_{p_l}\tilde{u}_{kl}$, the desired result	follows from
	\begin{equation}\label{obq}
		\langle\beta,\nu\rangle=\sqrt{u^{ij}\nu_i\nu_jh_{pk}h_{pl}u_{kl}}\geq C_{13},
	\end{equation}
	which is proved in \cite{Urbas1997OnTS}. Finally, set $C_{9}=C_{13}$, we get the uniformly obliqueness estimate.
\end{proof}
\section{ $C^2$ estimate}\label{sec4}
We now proceed to carry out the global $C^2$ estimate. The $C^2$ a priori bound is accomplished by making the second derivative
estimates on the boundary for the solutions of fully nonlinear parabolic equations. 

The strategy is to reduce the
$C^2$ global estimate of $u$ and $\tilde{u}$ to the boundary. 
\begin{lem}\label{C2estimate}
	Let $f\in \mathscr{A}_\delta$,  satisfying \eqref{lcond} and
	\begin{equation}\label{Dxp}
		\max_{\overline{\Omega}\times\overline{\tilde{\Omega}}}|D_{xp}f|\leq\frac{\Lambda_1\tilde{\theta}}{8\max_{\overline{\Omega}}|\tilde{h}|}.
	\end{equation}
	If $u$ is a strictly convex solution of \eqref{evoproblem}, then there exists a positive constant $C_{14}$ depending only on $n, \text{diam}(\Omega),  |\Omega|,\tilde{\theta},\max_{\overline{\Omega}}F[D^{2}u_{0}]$ and $\delta$, such that
	\begin{equation}\sup_{{\Omega_{T}}}|D^{2}u|\leq2 \max_{\partial\Omega\times[0,T]}|D^{2}u|+2\max_{{\overline{\Omega}}}|D^{2}u_{0}|+C_{14}\max_{ \overline{\Omega}\times\overline{\tilde{\Omega}}}|D_{xx}f|.
	\end{equation}
\end{lem}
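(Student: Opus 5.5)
We will prove the estimate with a maximum principle argument for the largest second directional derivative, corrected by a barrier built from $\tilde h$. Since $u$ is convex, $|D^2u|$ is controlled by $\max_{|\xi|=1}u_{\xi\xi}$. Fix a unit vector $\xi$ and differentiate \eqref{EQP} twice in the direction $\xi$. With $L:=F^{ij}\partial_{ij}-f_{p_i}\partial_i-\partial_t$ as in Lemma \ref{vnestimate}, we get
\begin{equation*}
Lu_{\xi\xi}=-F^{ij,rs}u_{ij\xi}u_{rs\xi}+f_{\xi\xi}+2f_{\xi p_k}u_{k\xi}+f_{p_kp_l}u_{k\xi}u_{l\xi}.
\end{equation*}
Concavity of $F$ (\eqref{concave}) gives $-F^{ij,rs}u_{ij\xi}u_{rs\xi}\ge0$. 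Convexity of $f$ in $p$ (since $f\in\mathscr{A}$) gives $f_{p_kp_l}u_{k\xi}u_{l\xi}\ge0$. Hence
\[
Lu_{\xi\xi}\ge -|D_{xx}f|-2|D_{xp}f|\,u_{\xi\xi}'
\]
at a point where $\xi$ is the top eigendirection, so that $|u_{k\xi}|\le M:=\max_{|\xi|=1}u_{\xi\xi}$ there. The bad term is therefore linear in $M$, with the small coefficient $|D_{xp}f|$.

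To absorb this term, consider the auxiliary function
\[
w(x,t):=u_{\xi\xi}(x,t)+K\tilde h(x),
\]
where $\tilde h\ge0$ in $\Omega$ is the concave defining function. By \eqref{Ltildeh} and \eqref{lcond}, $L\tilde h\le -\frac{\tilde\theta}{2}\sum_iF^{ii}\le -\frac{\tilde\theta\Lambda_1}{2}$, using \eqref{cond1}. Suppose $\max_{\overline{\Omega_T}}\max_\xi w$ is attained at an interior point $(x_1,t_1)$ with $t_1>0$, and rotate so that $\xi=e_1$ is the top eigendirection there. At that point $Lw\le 0$. On the other hand,
\[
Lw\ge -|D_{xx}f|-2|D_{xp}f|M+K\frac{\tilde\theta\Lambda_1}{2}.
\]
Choose
\[
K=\frac{2}{\tilde\theta\Lambda_1}\bigl(|D_{xx}f|+2|D_{xp}f|\,M'\bigr),
\]
where $M'$ is a slightly larger bound, so that $Lw>0$; this is a contradiction. (If $\max_\xi$ causes a nonsmoothness issue, we fix $\xi$ at the maximizing point and use that $u_{\xi\xi}\le M$ everywhere.) Consequently the maximum of $w$ lies on the parabolic boundary. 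This gives
\[
M\le \max_{\partial\Omega\times[0,T]}|D^2u|+\max_{\overline\Omega}|D^2u_0|+K\max_{\overline\Omega}\tilde h .
\]

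Next substitute $K$. The $M$-dependent part contributes $\frac{4|D_{xp}f|\max\tilde h}{\tilde\theta\Lambda_1}M$. By hypothesis \eqref{Dxp} this is at most $\frac12M$, and we move it to the left-hand side. Multiplying through by $2$ yields the stated inequality with $C_{14}=\frac{4\max_{\overline\Omega}\tilde h}{\tilde\theta\Lambda_1}$. This constant depends on the claimed quantities, because $\Lambda_1$ comes from Lemma \ref{lm3.2} and $\max\tilde h$ is controlled by $\operatorname{diam}(\Omega)$. Since $u$ is convex, $|D^2u|\le nM$ up to norm conventions, so the matrix norm statement follows (interpreting $|D^2u|$ as the largest eigenvalue).

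The main obstacle is the cross term $2f_{\xi p_k}u_{k\xi}$. It has no sign, and it is the only place where the smallness \eqref{Dxp} enters. The coefficient must close exactly to the factor $2$ in the final estimate, so the constants in $K$ and the use of \eqref{lcond} to control $f_{p_i}\tilde h_i$ must be tracked carefully. A secondary technical point is justifying the differentiated inequality at a maximum of $\max_\xi u_{\xi\xi}$. We handle this by freezing $\xi$ at the maximum point, which is the standard device.
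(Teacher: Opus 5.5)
Your strategy is the paper's: differentiate twice in $\xi$, discard $-F^{ij,rs}u_{ij\xi}u_{rs\xi}$ and $f_{p_kp_l}u_{k\xi}u_{l\xi}$ by concavity of $F$ and convexity of $f$ in $p$, bound the cross term by $2|D_{xp}f|\sup_{\Omega_T}|D^2u|$, add a multiple $K\tilde h$, and absorb the $\sup|D^2u|$ term using \eqref{Dxp}. Your value $C_{14}=4\max\tilde h/(\tilde\theta\Lambda_1)$ is what the paper obtains. However, your barrier has the wrong sign, and as written the key step fails. You correctly derive $L\tilde h\le-\frac{\tilde\theta\Lambda_1}{2}$. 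Yet for $w=u_{\xi\xi}+K\tilde h$ you then claim
$Lw\ge -|D_{xx}f|-2|D_{xp}f|M+K\frac{\tilde\theta\Lambda_1}{2}$,
which would need $L\tilde h\ge+\frac{\tilde\theta\Lambda_1}{2}$. In fact $KL\tilde h\le-K\frac{\tilde\theta\Lambda_1}{2}$. Adding the concave function $K\tilde h$ therefore lowers $Lw$, in the same direction as the bad terms. So you get no positive lower bound on $Lw$, and no contradiction with $Lw\le0$ at an interior maximum.

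The fix is to take $w=u_{\xi\xi}-K\tilde h$; the convex function $-\tilde h$ is the one with $L(-\tilde h)\ge\frac{\tilde\theta\Lambda_1}{2}$. Then $Lw\ge0$ in $\Omega_T$. Since $\tilde h\ge0$, the maximum principle gives $u_{\xi\xi}\le\sup_{\partial_p\Omega_T}u_{\xi\xi}+K\max_{\overline\Omega}\tilde h$, and the rest of your argument goes through unchanged. This is exactly the paper's argument, phrased there as $L(v-u_{\xi\xi})\le0$ for $v=\sup_{\partial_p\Omega_T}u_{\xi\xi}+K\tilde h$ together with the minimum principle.

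Two smaller remarks. First, replacing $M$ by a slightly larger $M'$ to make $Lw>0$ strict gives nothing when $D_{xp}f\equiv0$. Either enlarge $K$ by some $\varepsilon>0$ or just use the weak maximum principle for $Lw\ge0$. Second, no eigendirection freezing is needed. For each fixed unit $\xi$, positive definiteness already gives $|u_{k\xi}|\le|D^2u\,\xi|\le\sup_{\Omega_T}|D^2u|$.
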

\begin{proof}
	Let
	$$L:=F^{ij}\partial_{ij}-f_{p_i}\partial_i-\partial_t.$$
	For any unit vector $\xi$ , differentiating the equation in \eqref{evoproblem} twice in direction $\xi$ gives
	$$Lu_{\xi\xi}+F^{ij,rs}u_{ij\xi}u_{rs\xi}=f_{\xi\xi}+2f_{\xi p_i}u_{i\xi}+f_{p_ip_j}u_{i\xi}u_{j\xi}\quad in~\Omega_{T}.$$
	Then by the concavity of $F$ on $\Gamma_{n}^{+}$ and the convexity of $f$ in $p$, we have
	\begin{align*}
		Lu_{\xi\xi}&\geq f_{\xi\xi}+2f_{\xi p_i}u_{i\xi}\\
		&\geq f_{\xi\xi}-2\max_{\overline{\Omega}\times\overline{\tilde{\Omega}}}|D_{xp}f|\sup_{{\Omega_{T}}}|D^2u|.
	\end{align*}
	Let
	$$v=\sup_{\partial_p \Omega_{T}}u_{\xi\xi}+\frac{2}{\Lambda_1\tilde{\theta}}\left(\max_{ \overline{\Omega}\times\overline{\tilde{\Omega}}}|D_{xx}f|+2\max_{\overline{\Omega}\times\overline{\tilde{\Omega}}}|D_{xp}f|\sup_{{\Omega_{T}}}|D^2u|\right)\tilde{h}.$$
	By direct calculation and \eqref{cond1}, \eqref{lcond}, \eqref{Ltildeh}, we obtain
	\begin{align*}
		Lv&\leq\frac{2}{\tilde{\theta}\Lambda_1}\left(\max_{ \overline{\Omega}\times\overline{\tilde{\Omega}}}|D_{xx}f|+2\max_{\overline{\Omega}\times\overline{\tilde{\Omega}}}|D_{xp}f|\sup_{{\Omega_{T}}}|D^2u|\right)\left(-\tilde{\theta}\sum_{i=1}^nF^{ii}-f_{p_i}\partial_i\tilde{h}\right)\\
		&\leq -\max_{ \overline{\Omega}\times\overline{\tilde{\Omega}}}|D_{xx}f|-2\max_{\overline{\Omega}\times\overline{\tilde{\Omega}}}|D_{xp}f|\sup_{{\Omega_{T}}}|D^2u|\quad in~\Omega_{T}\\
	\end{align*}
	and thus
	$$L(v-u_{\xi\xi})\leq0\quad in~\Omega_{T}.$$
	It is obvious that $v-u_{\xi\xi}\geq 0$ on $\partial_p \Omega_{T}$. Then, by the maximum principle, we obtain
	\begin{align*}
		\sup_{\Omega_T}u_{\xi\xi} & \leq\sup_{{\Omega_{T}}}v\leq\sup_{{\partial_p \Omega_{T}}}u_{\xi\xi}+\frac{2}{\Lambda_1\tilde{\theta}}\max_{\overline{\Omega}}|\tilde{h}|\left(\max_{ \overline{\Omega}\times\overline{\tilde{\Omega}}}|D_{xx}f|+2\max_{\overline{\Omega}\times\overline{\tilde{\Omega}}}|D_{xp}f|\sup_{{\Omega_{T}}}|D^2u|\right) \\
		&\leq\max_{\partial\Omega\times[0,T]}|D^2u|+\max_{\overline{\Omega}}|D^2u_0|+\frac{2}{\Lambda_1\tilde{\theta}}\max_{\overline{\Omega}}|\tilde{h}|\max_{ \overline{\Omega}\times\overline{\tilde{\Omega}}}|D_{xx}f|\\
		&\quad +\frac{4}{\Lambda_1\tilde{\theta}}\max_{\overline{\Omega}}|\tilde{h}|\max_{\overline{\Omega}\times\overline{\tilde{\Omega}}}|D_{xp}f|\sup_{{\Omega_{T}}}|D^2u|.
	\end{align*}
	by the condition of  \eqref{Dxp},
	we complete the proof of \eqref{C2estimate}.
\end{proof}

Next, we estimate the second order derivative on the boundary. By differentiating
the boundary condition $h(Du) = 0$ in any tangential direction $\varsigma$, we have
\begin{equation}\label{ubetasigma}
	u_{\beta \varsigma}=h_{p_k}(Du)u_{k\varsigma}=0.
\end{equation}
The second order derivative of $u$ on the boundary is controlled by $u_{\beta \varsigma}, u_{\beta\beta} ~and~ u_{\varsigma\varsigma}$.
In the following, we give the arguments as in \cite{Urbas1997OnTS}, one can see there for more details.

At $x \in \partial\Omega$, any unit vector $\xi$ can be written in terms of a tangential component
$\varsigma(\xi)$ and a component in the direction $\beta$ by
$$\xi=\varsigma(\xi)+\frac{\langle\nu,\xi\rangle}{\langle\beta,\nu\rangle}\beta,$$
indeed, $\varsigma(\xi)$ can be expressed by
$$\varsigma(\xi)=\xi-\langle\nu,\xi\rangle\nu-\frac{\langle\nu,\xi\rangle}{\langle\beta,\nu\rangle}\beta^T,$$
and
$$\beta^{T}=\beta-\langle\beta,\nu\rangle\nu.$$
By the uniformly obliqueness estimate \eqref{obq eq}, we have
\begin{equation}\label{varsigma}
	|\varsigma(\xi)|^{2}\leq C_{15}.
\end{equation}
Denote $\varsigma:=\frac{\varsigma(\xi)}{|\varsigma(\xi)|}$ then by \eqref{ubetasigma}, \eqref{varsigma} and \eqref{obq eq}, we obtain
\begin{equation}\label{eq3.21}
	\begin{aligned}
		u_{\xi\xi} & =|\varsigma(\xi)|^2u_{\varsigma\varsigma}+2|\zeta(\xi)|\frac{\langle\nu,\xi\rangle}{\langle\beta,\nu\rangle}u_{\beta\varsigma}+\frac{\langle\nu,\xi\rangle^2}{\langle\beta,\nu\rangle^2}u_{\beta\beta} \\
		&=|\varsigma(\xi)|^2u_{\varsigma\varsigma}+\frac{\langle\nu,\xi\rangle^2}{\langle\beta,\nu\rangle^2}u_{\beta\beta} \\
		&\leq C_{16}(u_{\varsigma\varsigma}+u_{\beta\beta}),
	\end{aligned}
\end{equation}
where $C_{17}$ depends only on $n, \text{diam}(\Omega), \text{diam}(\tilde{\Omega}), |\Omega|, |\tilde{\Omega}|,\theta,\tilde{\theta},\Theta,\tilde{\Theta}$, $\max_{\overline{\Omega}}F[D^{2}u_{0}]$ and $\delta$. Therefore,
we only need to estimate $u_\mathrm{\beta\beta}$ and $u_\mathrm{\varsigma\varsigma}$, respectively.

Similar to Proposition 2.6 in \cite{Brendle2008ABV}, by making use of \eqref{Lv estimate}, we can obtain
\begin{lem}\label{Lphistimate}
	If $u$
	is a strictly convex solution to \eqref{evoproblem} and $|D_xf|,|D_pf|$ satisfy \eqref{dfcond}, \eqref{lcond}, respectively. Fix a smooth function $\eta: \Omega \times \tilde{\Omega } \to \mathbb{R}$  defined by $\varphi(x,t)=\eta(x,
	Du(x,t))$, it follows that
	$$|L\varphi|\leq C_{17}\sum_{i=1}^nF^{ii},$$
	where $C_{17}$ depends only on $n,  \text{diam}(\Omega), |\Omega|, \theta, \Theta, \max_{\overline{\Omega}}F[D^{2}u_{0}],\delta$ and $ \|\eta\|_{C^2(\overline{\Omega}\times\overline{\tilde{\Omega}})} $.
\end{lem}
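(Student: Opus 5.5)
The plan is a direct chain-rule computation, estimating each resulting term the same way $Lv$ was estimated in \eqref{Lv estimate} during the proof of Lemma \ref{vnestimate}. With $\varphi(x,t)=\eta(x,Du(x,t))$ we have
\[
\varphi_i=\eta_{x_i}+\eta_{p_k}u_{ki},\qquad \varphi_t=\eta_{p_k}u_{kt},
\]
\[
\varphi_{ij}=\eta_{x_ix_j}+\eta_{x_ip_k}u_{kj}+\eta_{x_jp_k}u_{ki}+\eta_{p_kp_l}u_{ki}u_{lj}+\eta_{p_k}u_{kij}.
\]
Substituting these into $L=F^{ij}\partial_{ij}-f_{p_i}\partial_i-\partial_t$ gives
\[
L\varphi=F^{ij}\eta_{x_ix_j}+2F^{ij}\eta_{x_ip_k}u_{kj}+F^{ij}\eta_{p_kp_l}u_{ki}u_{lj}-f_{p_i}\eta_{x_i}+\eta_{p_k}Lu_k .
\]
The terms $f_{p_i}\eta_{p_k}u_{ki}$ coming from $\varphi_i$ are absorbed into $\eta_{p_k}Lu_k$.

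Next I differentiate \eqref{EQP} in $x_k$, which gives $Lu_k=f_{x_k}$. Since $Du\in\overline{\tilde\Omega}$, the last term is then bounded by $\|\eta\|_{C^1}|D_xf|$. By \eqref{dfcond} and \eqref{lcond}, both $|D_xf|$ and $|D_pf|$ are bounded by constants of the admissible type, so $|f_{p_i}\eta_{x_i}|+|\eta_{p_k}Lu_k|\le C$.

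For the remaining terms I would work at a point where $D^2u$ is diagonal, so that $F^{ij}=\frac{\partial F}{\partial\lambda_i}\delta_{ij}$. The three terms are handled as follows.
\begin{itemize}
\item $|F^{ij}\eta_{x_ix_j}|\le C\sum_i F^{ii}$.
\item $|F^{ij}\eta_{x_ip_k}u_{kj}|\le C\sum_i\frac{\partial F}{\partial\lambda_i}\lambda_i$. By Cauchy--Schwarz this is at most $C(\sum_i\frac{\partial F}{\partial\lambda_i})^{1/2}(\sum_i\frac{\partial F}{\partial\lambda_i}\lambda_i^2)^{1/2}$, which is bounded using \eqref{cond1} and \eqref{cond2}.
\item $|F^{ij}\eta_{p_kp_l}u_{ki}u_{lj}|\le C\sum_i\frac{\partial F}{\partial\lambda_i}\lambda_i^2\le Cn$ by \eqref{cond2}.
\end{itemize}
Altogether $|L\varphi|\le C\sum_iF^{ii}+C'$, with $C,C'$ depending on $\|\eta\|_{C^2}$, $n$, the domains, $\max_{\overline{\Omega}} F[D^2u_0]$ and $\delta$ (the last two through $\Lambda_1$).

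The only step needing care is converting the additive constants $C'$ into multiples of $\sum_iF^{ii}$. For this I use the lower bound $\sum_iF^{ii}=\sum_i\frac{\partial F}{\partial\lambda_i}\ge\Lambda_1$ from \eqref{cond1}, so $C'\le \frac{C'}{\Lambda_1}\sum_iF^{ii}$. Taking $C_{17}=C+C'/\Lambda_1$ then gives the claim. The dependence of $\Lambda_1$ only on $n$, $\max_{\overline{\Omega}} F[D^2u_0]$ and $\delta$ (Lemma \ref{lm3.2}) ensures that $C_{17}$ has exactly the dependence stated.
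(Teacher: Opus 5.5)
Your proposal is correct and is essentially the argument the paper intends. The paper gives no separate proof, only a pointer to Proposition~2.6 of \cite{Brendle2008ABV} and to the computation behind \eqref{Lv estimate}; that computation is your chain-rule expansion, the identity $Lu_k=f_{x_k}$, the diagonal-frame bounds via \eqref{cond1}--\eqref{cond2}, and absorbing constants using $\sum_iF^{ii}\ge\Lambda_1$. One minor bookkeeping point: bounding $|D_pf|$ through \eqref{lcond} brings in $\tilde\theta$, which is not in the stated dependence list, so ``exactly the dependence stated'' slightly overclaims (the paper's own constants have the same looseness), but this is harmless.
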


Further, we have
\begin{lem}\label{ubetabeta}
	Let $f\in \mathscr{A}_{\delta}$ and $|D_pf|$ satisfies \eqref{lcond}. If $u$ is a strictly convex solution of \eqref{evoproblem}, then there exists a positive constant $C_{18}$ depending only on $n, \text{diam}(\Omega),  |\Omega|, \theta,\Theta,\tilde{\theta}$,  $\max_{\overline{\Omega}}F[D^{2}u_{0}]$ and $\delta$, such that
	\begin{equation}\label{ubetabetaesimate}
		\max_{\partial\Omega \times [0, T]}u_{\beta\beta}\leq C_{18}.
	\end{equation}
\end{lem}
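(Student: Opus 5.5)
The plan follows the standard barrier argument of Urbas \cite{Urbas1997OnTS} for $u_{\beta\beta}$, in its parabolic form from \cite{Schnrer2003NeumannAS,Huang2014OnTS}. The key observation is that $h(Du)$ vanishes on $\partial\Omega$ and is positive in $\Omega$, since $Du(\Omega)=\tilde\Omega$. So at a boundary point the quantity $u_{\beta\beta}$ equals, up to a bounded factor, the derivative of $h(Du)$ in the direction $\beta$. Concretely,
\[
D_\beta\big(h(Du)\big)=h_{p_k}(Du)u_{k l}\beta^l=\beta^k\beta^l u_{kl}=u_{\beta\beta}.
\]
It therefore suffices to bound $D_\beta(h(Du))$ from above at the boundary point $(x_0,t_0)$ where $u_{\beta\beta}$ attains its maximum over $\partial\Omega\times[0,T]$. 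We may take $t_0>0$, since otherwise the bound comes from $u_0$.

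\textbf{Step 1 (barrier).} Set $\varphi:=h(Du)$, which is of the form $\eta(x,Du)$ in Lemma \ref{Lphistimate}. That lemma gives $|L\varphi|\le C_{17}\sum_i F^{ii}$ with $L=F^{ij}\partial_{ij}-f_{p_i}\partial_i-\partial_t$. One can also compute this directly: use $Lu_k=f_k$, \eqref{cond1} and \eqref{cond2} to control $h_{p_kp_l}F^{ij}u_{ki}u_{lj}$, and note that this term is in fact $\le 0$ by the concavity of $h$. Compare $\varphi$ with $\psi:=C_0\tilde h(x)$ in $\Omega\times(0,T]$. By \eqref{Ltildeh} and \eqref{lcond},
\[
L\tilde h\le -\tilde\theta\sum F^{ii}+|D_pf||D\tilde h|\le -\tfrac{\tilde\theta}{2}\sum F^{ii}.
\]
Choosing $C_0=2C_{17}/\tilde\theta+1$ then gives $L(\psi-\varphi)\le 0$, hence $L(\varphi-\psi)\ge0$.

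\textbf{Step 2 (comparison).} The boundary data are favorable. On $\partial\Omega\times(0,T]$ we have $\varphi=0=\psi$. On $\{t=0\}$, $\varphi=h(Du_0)\ge0$, and this is $\ge C_0\tilde h$ is not automatic; to handle it, we enlarge $C_0$ using $h(Du_0)\ge c_0\tilde h$, which holds with $c_0>0$ depending on $u_0$ through the uniform convexity of $u_0$ and Hopf's lemma. The cleaner route avoids this altogether: compare $\varphi-\psi$ and use the minimum principle with $\varphi-\psi\le 0$ ... The direction is actually the point to get right. We want an \emph{upper} bound on $D_\beta\varphi$ at a boundary point where $\varphi=0$, so we need $\varphi\le \psi$ near the boundary. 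We therefore use $L(\psi-\varphi)\le0$. This makes $\psi-\varphi$ a supersolution, so its minimum over $\overline{\Omega_T}$ is attained on the parabolic boundary. There $\psi-\varphi=0$ on $\partial\Omega$, and at $t=0$ we have $\psi-\varphi=C_0\tilde h-h(Du_0)\ge0$ once $C_0\tilde h\ge h(Du_0)$, which holds for $C_0$ large since $h(Du_0)\le C\,\mathrm{dist}(x,\partial\Omega)\le C'\tilde h$. Hence $\varphi\le C_0\tilde h$ in $\Omega_T$, with equality at $(x_0,t_0)$. Because $\beta$ points inward by Lemma \ref{Urbas}, differentiating along $\beta$ gives
\[
u_{\beta\beta}(x_0,t_0)=D_\beta\varphi\le C_0 D_\beta\tilde h\le C_0|D\tilde h|,
\]
which is \eqref{ubetabetaesimate}.

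\textbf{Main obstacle.} The main obstacle is making Step 1 rigorous with constants independent of $t$ and of $\sup|D^2u|$. This rests on the estimate $|L\varphi|\le C\sum F^{ii}$, which uses \eqref{cond2} to absorb the quadratic term in $D^2u$ and the smallness \eqref{lcond} of $D_pf$ to keep the drift term from destroying the sign of $L\tilde h$. A secondary point is the initial-time comparison: the constant there depends on $u_0$ as well as on the other listed constants. If it must be avoided, I would instead localize the barrier in $\mathscr N_\rho\times[0,T]$ as in Lemma \ref{vnestimate}, adding $A|x-x_0|^2$ and using the boundedness of $\varphi$ on the inner boundary.
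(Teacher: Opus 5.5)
Your argument is correct and is essentially the paper's: the paper compares $h(Du)$ with $C_0\tilde h+A|x-x_0|^2$ on the localized cylinder $\mathscr{N}_\rho\times[0,T]$, using Lemma \ref{Lphistimate}, \eqref{Ltildeh} and \eqref{lcond}, and then differentiates along $\beta$ at $(x_0,t_0)$. Your global version, without the $A|x-x_0|^2$ term, is legitimate because $h(Du)$ and $\tilde h$ both vanish on all of $\partial\Omega$.

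The dependence of $C_0$ on $u_0$ coming from the initial-slice comparison cannot be avoided. The case $t_0=0$ already forces it, and the paper's localized barrier needs the same comparison on $\mathscr{N}_\rho\times\{0\}$, so your suggested localization would not remove it. This is harmless, since the global $C^2$ bound of Lemma \ref{C2globalbound} depends on $\|u_0\|_{C^2(\overline{\Omega})}$ anyway.
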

\begin{proof}
	Let $x_0 \in\partial\Omega$, $t_0 \in [0, T]$ satisfy $u_{\beta\beta}(x_0,t_0) = \max_{\partial\Omega \times [0, T]}u_{\beta\beta}$ and we also denote $\mathscr{N}_{\rho}$ and $L$ as in Lemma \ref{vnestimate}. 
	
	Consider the barrier function
	$$\Psi:=-h(Du)+C_0\tilde{h}+A|x-x_0|^2.$$
	It follows the same arguement in the proof of Lemma \ref{vnestimate} that we can choose $A$ large enough and  $C_0\gg A$ such that
	$$\Psi\geq0,\quad on~\partial\mathscr{N}_{\rho}\times[0,T].$$

	Noting that  $h$ is a smooth function depending on $Du$, by Lemma  \ref{Lphistimate} there exists $C$ depending only on $n, \text{diam}(\Omega), |\Omega|,  \theta, \Theta, \max_{\overline{\Omega}}F[D^2u_{0}] $ and $\delta$, such that 
	$$|Lh|\leq C\sum_{i=1}^{n}F^{ii}.$$
	Letting $C_0\gg A$  and using \eqref{lcond}, \eqref{Ltildeh}, one yields
	$$\begin{aligned}
		L\Psi&=-Lh+C_0L\tilde{h}+AL(|x-x_{0}|^2)\\
		&\leq \left(C+2A-\frac{C_0\tilde{\theta}}{2}\right)\sum_{i=1}^{n}F^{ii}++2A\sum_{i=1}^{n}f_{p_i}(x_i-x_{0i})\\
		&\quad -C_0\left(\frac{\tilde{\theta}}{2}\sum_{i=1}^{n}F^{ii}+f_{p_i}\partial_i\tilde{h}\right),\\
		&\leq 0,\quad \quad  \quad \quad \quad in~\mathscr{N}_{\rho}\times[0,T].
	\end{aligned}$$

	
	Using the maximum principle, we deduce that
	$$\Psi\geq0,\quad in~\mathscr{N}_{\rho}\times[0,T].$$
	Combining it with $\Psi(x_0,t_0)=0$, we obtain $\Psi_\beta(x_0,t_0)\geq0$,  which implies
	$$\frac{\partial h}{\partial\beta}(Du(x_0,t_0))\leq C_{18}.$$
	On the other hand, we see that at $(x_0, t_0)$,
	$$\frac{\partial h}{\partial\beta}=\langle Dh(Du),\beta\rangle=\frac{\partial h}{\partial p_k}u_{kl}\beta^l=\beta^ku_{kl}\beta^l=u_{\beta\beta}.$$
	Therefore,
	$$u_{\beta\beta}=\frac{\partial h}{\partial\beta}\leq C_{18},$$
	whence the result follows.
\end{proof}

Next, we estimate the double tangential derivative.
\begin{lem}\label{utangent}
	Let $f\in \mathscr{A}_{\delta}$, $|D_xf|,|D_pf|,|D_{xp}f|$ satisfy \eqref{dfcond}, \eqref{lcond}, \eqref{Dxp}, respectively. If $u$ is a strictly convex solution of \eqref{evoproblem}, then there exists a positive constant $C_{19}$ depending only on $n, \text{diam}(\Omega),$ $ \text{diam}(\tilde{\Omega}), |\Omega|,|\tilde{\Omega}|,\theta,\tilde{\theta},\Theta,\tilde{\Theta},\max_{\overline{\Omega}}F[D^{2}u_{0}]$ and $\delta$, such that
	\begin{equation}\label{uvarsigma}
		\max_{\partial\Omega\times[0,T]}u_{\varsigma\varsigma}\leq C_{19}.
	\end{equation}
\end{lem}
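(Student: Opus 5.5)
We follow Urbas' argument for the double tangential derivative, as in \cite{Urbas1997OnTS,Huang2014OnTS,Wang2024}. Set
$$M:=\max_{\partial\Omega\times[0,T]}\ \max_{|\varsigma|=1,\ \langle\varsigma,\nu\rangle=0}u_{\varsigma\varsigma},$$
and let $(x_0,t_0)$ and a unit tangent vector $\varsigma_0$ at $x_0$ attain it. Rotate so that $x_0=0$, $\nu(x_0)=e_n$, $\varsigma_0=e_1$. If $t_0=0$, the bound follows from $\|u_0\|_{C^2}$, so assume $t_0>0$. The goal is an inequality of the form $u_{11\beta}(x_0,t_0)\le C(1+M)$ with a favorable sign structure, together with a contradiction to $M$ being large.

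\textbf{Step 1: relating $u_{11\beta}$ to $M$ through the boundary condition.} Differentiate $h(Du)=0$ twice tangentially along $\partial\Omega$ at $x_0$. Using $h_{p_k}u_{k\varsigma}=0$ from \eqref{ubetasigma}, the curvature of $\partial\Omega$, and the concavity $D^2h\le-\theta I$, we get
$$h_{p_k}u_{k11}+h_{p_kp_l}u_{k1}u_{l1}-\kappa_1 h_{p_k}\nu_k u_{\nu\nu}+(\text{tangential-connection terms})=0 .$$
Since $h_{p_kp_l}u_{k1}u_{l1}\le-\theta\sum_k u_{k1}^2\le-\theta u_{11}^2=-\theta M^2$, and since the terms involving $u_{\nu\nu}$ are controlled by $u_{\beta\beta}$ (Lemma~\ref{ubetabeta}), decomposition \eqref{eq3.21}, and the obliqueness estimate \eqref{obq eq}, this gives
$$u_{\beta 11}(x_0,t_0)\ge \theta M^2-C(1+M).$$

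\textbf{Step 2: a barrier comparison.} Consider
$$w:=u_{\xi\xi}-M\Big(1+\text{a quadratic correction}\Big),$$
where the correction, in Urbas' style, handles the extension of the tangent vector $\varsigma(x)$ off $x_0$ via the projection formula for $\varsigma(\xi)$. One can also use a function such as $u_{11}-C\,|Du-Du(x_0)|$-type terms. We need $w\le0$ on $\partial\Omega\times[0,T]$ near $x_0$, with equality at $(x_0,t_0)$; on $\partial\Omega$ this comes from the decomposition \eqref{eq3.21}, by absorbing the $u_{\beta\beta}$ part with Lemma~\ref{ubetabeta}.

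For the interior inequality, the computation in Lemma~\ref{C2estimate} gives
$$Lu_{\xi\xi}\ge f_{\xi\xi}+2f_{\xi p_i}u_{i\xi}\ge -C-C\,|D_{xp}f|\,M.$$
Here \eqref{Dxp} and the $\sup|D^2u|\le C(1+M)$ bound from Lemma~\ref{C2estimate} keep the right-hand side of order $(1+M)\sum F^{ii}$, by \eqref{cond1}. Add the barrier $-(C_0\tilde h+A|x|^2)(1+M)$ together with $-\varphi$ terms controlled by Lemma~\ref{Lphistimate}. The maximum principle on $\mathscr N_\rho\times[0,T]$ then gives $\partial_\beta w(x_0,t_0)\le C(1+M)$, that is, $u_{11\beta}\le C(1+M)$.

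\textbf{Step 3: conclusion.} Combining the two steps gives $\theta M^2\le C(1+M)$, hence $M\le C_{19}$.

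The main obstacle is Step 2. The tangent direction $e_1$ must be extended so that the boundary comparison $w\le0$ holds, and the oblique derivative $\partial_\beta$ must be taken rather than $\partial_\nu$; the $f$-dependent lower-order terms must also be absorbed, which is where the smallness conditions \eqref{lcond}, \eqref{dfcond}, \eqref{Dxp} enter. If the boundary inequality fails, we would instead follow \cite{Urbas1997OnTS} and use the Legendre dual $\tilde u$ to estimate the complementary quantity, as in the proof of Proposition~\ref{oblique estimate}.
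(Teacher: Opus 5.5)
Your overall architecture matches the paper's: Steps 1 and 3 are exactly how the paper closes the argument. Your Step 1 is in fact more careful, since the paper silently drops the boundary-curvature term. That term is really $\kappa_1u_{\beta\nu}=\kappa_1u_{\beta\beta}/\langle\beta,\nu\rangle$ by \eqref{ubetasigma}, which is bounded, rather than a $u_{\nu\nu}$ term; your cruder $C(1+M)$ bound is nevertheless true.

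\textbf{The gap is Step 2.} This is the real content of the lemma, and you never specify the barrier. The form you describe, $M$ times a \emph{quadratic} correction, cannot give $w\le0$ on $\partial\Omega$ near $x_0$. The only defect you identify on $\partial\Omega$ is the $u_{\beta\beta}$ part, but there is another one. At a boundary point $x$ near $x_0$, \eqref{eq3.21} applied to the fixed vector $\xi=e_1$, together with Lemma \ref{ubetabeta} and \eqref{obq eq}, gives only
\[
u_{11}(x)\le M\Bigl(1-2\langle\nu,e_1\rangle\frac{\langle\beta^T,e_1\rangle}{\langle\beta,\nu\rangle}\Bigr)+C(1+M)\langle\nu,e_1\rangle^2 .
\]
The middle term is of exact first order in $|x-x_0|$ and has no sign. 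This is because $\langle\nu(x),e_1\rangle$ vanishes only to first order at $x_0$ (by the curvature of $\partial\Omega$), while $\langle\beta^T,e_1\rangle$ is in general nonzero there. No term $A|x-x_0|^2$ can dominate it near $x_0$, and $C_0\tilde h$ vanishes on $\partial\Omega$, so neither helps on the boundary.

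Your fallbacks do not repair this. The term $|Du-Du(x_0)|$ is not smooth at $x_0$, and $L$ applied to $-|Du-Du(x_0)|$ is unbounded below near $x_0$, which is the wrong direction for your subsolution. The Legendre dual plays no role in this lemma; the paper uses it only for obliqueness and for the lower bound of $D^2u$.

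\textbf{The missing idea} is to put this first-order term into the barrier itself, as a function of $(x,Du)$. The paper takes
\[
w=A|x-x_0|^2-\frac{u_{11}}{M}-2\langle\nu,e_1\rangle\frac{\langle\beta^T,e_1\rangle}{\chi(\langle\beta,\nu\rangle)}+C_{21}\langle\nu,e_1\rangle^2+1 .
\]
Here $\nu$ is extended smoothly off $\partial\Omega$. The function $\chi$ is a cutoff equal to the identity above the obliqueness constant in \eqref{obq eq} and bounded below by a positive constant. So it coincides with $\langle\beta,\nu\rangle$ on $\partial\Omega$, but keeps the expression smooth in the interior, where $\langle\beta,\nu\rangle$ is not controlled. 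The argument then runs as follows.
\begin{enumerate}
\item By the decomposition above, $w\ge0$ on $\partial\Omega\times[0,T]$ and $w(x_0,t_0)=0$.
\item The correction is $\eta(x,Du)$ with $\eta$ smooth, so Lemma \ref{Lphistimate} bounds its $L$ by $C\sum_iF^{ii}$.
\item Combining this with $Lu_{11}\ge f_{11}-2|D_{xp}f|\sup|D^2u|$, Lemma \ref{C2estimate}, and $\sup|D^2u|\le C(1+M)$ gives $Lw\le C\sum_iF^{ii}$.
\item Hence $\Upsilon=w+C_0\tilde h$ is a supersolution with $\Upsilon\ge0$ on the parabolic boundary of $\mathscr{N}_\rho\times[0,T]$, and the maximum principle gives $\Upsilon_\beta(x_0,t_0)\ge0$.
\item To turn this into $u_{11\beta}\le CM$, you need $D_\beta[\eta(x,Du)]$ bounded at $x_0$. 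This holds because on $\partial\Omega$, by \eqref{ubetasigma}, $u_{l\beta}=\frac{\langle\nu,e_l\rangle}{\langle\beta,\nu\rangle}u_{\beta\beta}$, which is controlled by Lemma \ref{ubetabeta} and \eqref{obq eq}.
\end{enumerate}
The last point is the precise reason the derivative is taken along $\beta$ rather than $\nu$, which you noticed but did not connect to the barrier. With this $w$ in place, your Steps 1 and 3 give $\theta M^2\le C(1+M)$ and the bound follows.
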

\begin{proof}
	Without loss of generality, we assume that $x_0\in\partial\Omega,t_0\in(0,T]$, and $e_n,e_1$ are the unit inward normal vector and unit tangential vector at $(x_0,t_0)$, such that
	$$\max_{\partial\Omega\times[0,T]}u_{\varsigma\varsigma}=u_{11}(x_0,t_0)=:\mathcal{M}.$$
	For any $x\in \Omega$, by uniformly oblique estimate \eqref{obq eq}, we have
	\begin{equation}\label{eq3.25}
		\begin{aligned}
			u_{\xi\xi} & =|\varsigma(\xi)|^2u_{\varsigma\varsigma}+\frac{\langle\nu,\xi\rangle^2}{\langle\beta,\nu\rangle^2}u_{\beta\beta} \\
			&\leq \left(1+C_{20}\langle\nu,\xi\rangle^{2}-2\langle\nu,\xi\rangle\frac{\langle\beta^{T},\xi\rangle}{\langle\beta,\nu\rangle}\right)\mathcal{M}+\frac{\langle\nu,\xi\rangle^{2}}{\langle\beta,\nu\rangle^{2}}u_{\beta\beta}.,
		\end{aligned}
	\end{equation}
	Without loss of generality, we assume that $\mathcal{M} \geq 1$. Then by \eqref{obq eq} and \eqref{ubetabetaesimate}, one yields
	\begin{equation}\label{eq3.26}
		\frac{u_{\xi\xi}}{\mathcal{M}}+2\langle\nu,\xi\rangle\frac{\langle\beta^{T},\xi\rangle}{\langle\beta,\nu\rangle}\leq1+C_{21}\langle\nu,\xi\rangle^{2}.
	\end{equation}
	Let $ \xi = e_1$, then
	\begin{equation}\label{eq3.27}
		\frac{u_{11}}{\mathcal{M}}+2\langle\nu,e_1\rangle\frac{\langle\beta^{T},e_1\rangle}{\langle\beta,\nu\rangle}\leq1+C_{21}\langle\nu,e_1\rangle^{2}.
	\end{equation}
	As in the proof of Proposition 2.14 in \cite{Brendle2008ABV}, let $\chi:\mathbb{R}\to\mathbb{R}$ be a smooth cutoff function satisfying $\chi(s)=s$ for $s\geq\frac1{C_1}$ and $\chi(s)\geq\frac1{2C_1}$ for all $s\in\mathbb{R}$.
	We see that the function
	\begin{equation}\label{w}
		w:=A|x-x_{0}|^{2}-\frac{u_{11}}{\mathcal{M}}-2\langle\nu,e_{1}\rangle\frac{\langle\beta^{T},e_{1}\rangle}{\chi(\langle\beta,\nu\rangle)}+C_{21}\langle\nu,e_{1}\rangle^{2}+1
	\end{equation}
	satisfies
	$$w|_{\partial\Omega\times[0,T]}\geq0 \quad and \quad  w(x_0,t_0)=0.$$
	Then, it follows the same arguement in Lemma \ref{vnestimate} that we can choose the constant $A$ large enough such that
	$$w|_{(\partial B_\rho(x_0)\cap\Omega)\times[0,T]}\geq0.$$
	
	Consider
	$$-2\langle\nu,e_{1}\rangle\frac{\langle\beta^{T},e_{1}\rangle}{\chi(\langle\beta,\nu\rangle)}+C_{21}\langle\nu,e_{1}\rangle^{2}+1$$
	as a known smooth function depending on $x$ and $Du$. Then by Lemma \ref{Lphistimate}, we obtain
	$$\left|L\left(-2\langle\nu,e_1\rangle\frac{\langle\beta^T,e_1\rangle}{\chi(\langle\beta,\nu\rangle)}+C_{21}\langle\nu,e_1\rangle^2+1\right)\right|\leq C_{22}\sum_{i=1}^nF^{ii}.$$
	By making of the concavity of $F$ and the convexity of $f$ in $p$,  we have
	\begin{align*}
		Lu_{11}\geq f_{11}-2\max_{\overline{\Omega}\times\overline{\tilde{\Omega}}}|D_{xp}f|\sup_{\Omega_{T}}|D^2u|.
	\end{align*}
	Combining the Lemmas \ref{C2estimate}, \ref{ubetabeta} and \eqref{eq3.21}, we find
	\begin{align*}
		Lw&\leq2A\sum_{i=1}^nF^{ii}-2Af_{p_i}(x_i-x_{0i})-\frac{1}{\mathcal{M}}\left(f_{11}-2\max_{\overline{\Omega}\times\overline{\tilde{\Omega}}}|D_{xp}^2f|\sup_{\Omega_{T}}|D^2u|\right)+C_{22}\sum_{i=1}^nF^{ii}\\
		& \leq C_{23}\sum_{i=1}^nF^{ii}.
	\end{align*}
	As in the proof of Lemma \ref{ubetabeta}, we consider the function
	$$\Upsilon:=w+C_0\tilde{h}.$$
	By a simple calculation, we can get
	$$\Upsilon\geq0 \quad on~(\partial\mathscr{N}_{\rho}\times[0,T])\cup(\mathscr{N}_{\rho}\times\{t=0\}),$$
	and it follows by \eqref{lcond}, we have
	\begin{align*}
		L\Upsilon&=Lw+C_0L\tilde{h}\\
		&\leq \left(C_{23}-\frac{C_0\tilde{\theta}}{2} \right)\sum_{i=1}^nF^{ii}.
	\end{align*}
	Then, letting $C_0$ be large enough, one yields 
	$$L\Upsilon\leq 0  \quad in \mathscr{N}_{\rho}\times[0,T].$$

	Combining the maximum principle with $\Upsilon(x_0,t_0)=0$, we obtain $\Upsilon_\beta(x_0,t_0)\geq0$,  which implies
	\begin{equation}\label{u11beta}
		u_{11\beta}(x_0,t_0)\leq C_{24}\mathcal{M}.
	\end{equation}
	On the other hand, differentiating $h(Du)$ twice in the direction $e_1$ at $(x_0, t_0)$, we have
	$$h_{{p_{k}}}u_{k11}+h_{{p_{k}p_{l}}}u_{k1}u_{l1}=0.$$
	The concavity of $h$ yields that
	$$h_{{p_{k}}}u_{k11}=-h_{{p_{k}p_{l}}}u_{k1}u_{l1}\geq \theta|Du_1|^2\geq\theta \mathcal{M}^{2}.$$
	Combining it with $ h_{ p_k}u_{k11} = u_{11\beta}$, and using \eqref{u11beta}, we obtain
	$$\theta \mathcal{M}^2\leq C_{24}\mathcal{M}.$$
	Then, we get the upper bound of $\mathcal{M} = u_{11}(x_0, t_0)$ and thus the desired result
	follows.
\end{proof}

By Lemma \ref{ubetabeta}, \ref{utangent} and \eqref{eq3.21}, we obtain the $C^2$ a priori estimate on
the boundary.
\begin{lem}\label{C2bound}
	Let $f\in \mathscr{A}_{\delta}$  and $|D_xf|,|D_pf|,|D_{xp}f|$ satisfy \eqref{dfcond}, \eqref{lcond}, \eqref{Dxp}, respectively. If $u$ is a strictly convex solution of \eqref{evoproblem}, then there exists a positive constant $C_{25}$ depending only on $n, \text{diam}(\Omega), \text{diam}(\tilde{\Omega}), |\Omega|, |\tilde{\Omega}|,\theta,\tilde{\theta},\Theta,\tilde{\Theta}, \max_{\overline{\Omega}}F[D^{2}u_{0}]$ and $\delta$, such that
	\begin{equation}\label{D2upartialomega}
		\max_{\partial \Omega\times[0,T]}|D^2u|\leq C_{25}.
	\end{equation}
\end{lem}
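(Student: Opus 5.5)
The bound \eqref{D2upartialomega} will follow by combining the decomposition \eqref{eq3.21} with Lemmas \ref{ubetabeta} and \ref{utangent}, and then using convexity of $u$ to pass from directional second derivatives to the full Hessian norm. I would fix $(x,t)\in\partial\Omega\times[0,T]$ and an arbitrary unit vector $\xi$. The uniformly obliqueness estimate of Proposition \ref{oblique estimate} applies under the present hypotheses, since $f\in\mathscr{A}_\delta$ satisfies \eqref{lcond} and \eqref{dfcond}. Hence $\langle\beta,\nu\rangle\geq 1/C_9$ on $\partial\Omega\times[0,T]$. This gives both the bound \eqref{varsigma} on the tangential component $\varsigma(\xi)$ and a bound on the coefficient $\langle\nu,\xi\rangle^2/\langle\beta,\nu\rangle^2\leq C_9^2$.

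Next I would write, as in \eqref{eq3.21},
$$u_{\xi\xi}=|\varsigma(\xi)|^2u_{\varsigma\varsigma}+\frac{\langle\nu,\xi\rangle^2}{\langle\beta,\nu\rangle^2}u_{\beta\beta},$$
with $\varsigma=\varsigma(\xi)/|\varsigma(\xi)|$ a unit tangential vector. The mixed term has already been removed using \eqref{ubetasigma}. When $\varsigma(\xi)=0$ the first term is simply absent. The quantity $u_{\beta\beta}$ is bounded by $C_{18}$ (Lemma \ref{ubetabeta}), and $u_{\varsigma\varsigma}$ is bounded by $C_{19}$ (Lemma \ref{utangent}). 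Note that $\beta=Dh(Du)$ has $|\beta|\leq \max_{\overline{\tilde\Omega}}|Dh|$, which is a bounded quantity, so $u_{\beta\beta}$ is comparable to a normalized second derivative. Therefore $u_{\xi\xi}\leq C_{15}C_{19}+C_9^2C_{18}$ for every unit vector $\xi$.

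Finally, strict convexity gives $D^2u>0$, so all eigenvalues are positive and $|D^2u|\leq n\max_{|\xi|=1}u_{\xi\xi}$. Taking the maximum over $\partial\Omega\times[0,T]$ yields \eqref{D2upartialomega}, with $C_{25}$ depending only on the constants listed in Lemmas \ref{ubetabeta}, \ref{utangent} and Proposition \ref{oblique estimate}. There is no real obstacle here, since the heavy barrier work is contained in those lemmas. The one point to check is the use of nonnegativity: \eqref{eq3.21} gives only an upper bound for $u_{\xi\xi}$, and it is convexity that turns this one-sided control into a bound on $|D^2u|$.
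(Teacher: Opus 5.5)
Your proposal is correct and follows the paper's argument, which likewise just combines the decomposition \eqref{eq3.21} (with the mixed term killed by \eqref{ubetasigma}) with the bounds of Lemmas \ref{ubetabeta} and \ref{utangent}. Your explicit use of $D^2u>0$ to turn the one-sided bound on $u_{\xi\xi}$ into a bound on $|D^2u|$ supplies a step the paper leaves implicit.
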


In terms of Lemmas \ref{C2estimate} and \ref{C2bound}, we readily conclude:
\begin{lem}\label{C2globalbound}
	Let $f\in \mathscr{A}_{\delta}$  and $|D_xf|,|D_pf|,|D_{xp}f|$ satisfy \eqref{dfcond}, \eqref{lcond}, \eqref{Dxp}, respectively. If $u$ is a strictly convex solution of \eqref{evoproblem}, then there exists a positive constant $C_{26}$ depending only on $n, \text{diam}(\Omega), \text{diam}(\tilde{\Omega}), |\Omega|, |\tilde{\Omega}|,\theta,\tilde{\theta},\Theta,\tilde{\Theta},\|u_0\|_{C^2(\overline{\Omega})}$,\\
	$\max_{\overline{\Omega}}F[D^{2}u_{0}]$ and $\delta$, such that
	\begin{equation}\label{D2upartialbaromega}
		\max_{\overline{\Omega_{T}}}|D^2u|\leq C_{26}.
	\end{equation}
\end{lem}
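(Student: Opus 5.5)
This estimate follows by combining the interior reduction of Lemma \ref{C2estimate} with the boundary bound of Lemma \ref{C2bound}. The hypotheses of Lemma \ref{C2globalbound} contain those of both lemmas: $f\in\mathscr{A}_\delta$, \eqref{lcond}, \eqref{dfcond} and \eqref{Dxp}. So, for every $T$ for which the strictly convex solution exists, both are available with constants independent of $T$.

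\textbf{Step 1 (boundary bound).} Lemma \ref{C2bound} gives $\max_{\partial\Omega\times[0,T]}|D^2u|\leq C_{25}$. Recall how this was obtained. Lemma \ref{ubetabeta} bounds $u_{\beta\beta}$, and Lemma \ref{utangent} bounds $u_{\varsigma\varsigma}$. The decomposition \eqref{eq3.21}, together with the uniform obliqueness \eqref{obq eq} from Proposition \ref{oblique estimate}, then controls $u_{\xi\xi}$ for every unit vector $\xi$ at boundary points. Since $u$ is convex, $|D^2u|$ is comparable to $\max_{|\xi|=1}u_{\xi\xi}$, so this yields the bound on the full Hessian.

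\textbf{Step 2 (reduction to the parabolic boundary).} Insert this into Lemma \ref{C2estimate}:
\[
\sup_{\Omega_T}|D^2u|\leq 2C_{25}+2\max_{\overline{\Omega}}|D^2u_0|+C_{14}\max_{\overline{\Omega}\times\overline{\tilde{\Omega}}}|D_{xx}f|.
\]
The term $\max|D^2u_0|$ is controlled by $\|u_0\|_{C^2(\overline{\Omega})}$, which explains why that norm appears among the dependencies. The term $\max|D_{xx}f|$ is fixed data. On $\partial\Omega\times[0,T]$ and at $t=0$ the bound already holds, so the estimate extends to all of $\overline{\Omega_T}$ and we obtain \eqref{D2upartialbaromega}.

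\textbf{Main obstacle.} The delicate point is Lemma \ref{C2estimate}, where $\sup_{\Omega_T}|D^2u|$ appears on both sides. It must be absorbed using the smallness condition \eqref{Dxp}. With that condition the coefficient of $\sup|D^2u|$ on the right is at most $\tfrac12$, and absorbing it produces the factor $2$ in the lemma. This absorption requires $\sup_{\Omega_T}|D^2u|<\infty$ a priori. That holds because $u\in C^{2+\alpha_0,1+\frac{\alpha_0}{2}}(\overline{\Omega_T})$ by Proposition \ref{shorttime} and the regularity assumed in Section \ref{section3}.

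The final constant $C_{26}$ therefore depends on the listed quantities, together with $\|f\|_{C^2}$, which is implicit through $D_{xx}f$.
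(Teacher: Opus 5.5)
Your proposal is correct and matches the paper's proof, which simply inserts the boundary bound of Lemma \ref{C2bound} into the reduction of Lemma \ref{C2estimate}; you also correctly identify the absorption via \eqref{Dxp} as the source of the factor $2$. Your remark that $C_{26}$ also depends on $\max|D_{xx}f|$ (through the $C_{14}$ term) is accurate. It points to an omission in the stated dependency list, not a gap in your argument.
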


In the following, we describe the positive lower bound of $D^2u$. 
By \eqref{concavemu}, we consider the dual problem \eqref{tilde u eq1}, using the same arguement in the proof of Lemma \ref{C2globalbound} with operator
$$\tilde{L}:=\tilde{F}^{ij}\partial_{ij}+f_{\tilde{p}_i}\partial_i-\partial_t,$$
we can obtain

\begin{lem}\label{tildeC2globalbound}
	Let $f\in \mathscr{A}_{\delta}$, $|D_xf|, |D_pf|$ satisfy \eqref{dfcond}, \eqref{lcond} and  
	\begin{equation}\label{Dxp1}
		\max_{ \overline{\Omega}\times\overline{\tilde{\Omega}}}|D_{xp}f|\leq\frac{\Lambda_1\theta}{8\max_{\overline{\tilde{\Omega}}}|h|}.
	\end{equation}
	If $\tilde{u}$ is a strictly convex solution of \eqref{tilde u eq1}, then there exists a positive constant $C_{38}$ depending only on $n, \text{diam}(\Omega),  \text{diam}(\tilde{\Omega}), |\Omega|, |\tilde{\Omega}|,\theta,\tilde{\theta},\Theta,\tilde{\Theta},\|u_0\|_{C^2(\overline{\Omega})},\max_{\overline{\Omega}}F[D^{2}u_{0}]$ and $\delta$, such that
	\begin{equation}\label{tildeD2upartialbaromega}
		\max_{\overline{\tilde{\Omega}_{T}}}|D^2\tilde{u}|\leq C_{27}.
	\end{equation}
\end{lem}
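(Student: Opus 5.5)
The plan is to run the argument of Lemmas \ref{C2estimate}--\ref{C2globalbound} verbatim for the dual problem \eqref{tilde u eq1}, with the roles of $(\Omega,h,\theta,\Theta)$ and $(\tilde\Omega,\tilde h,\tilde\theta,\tilde\Theta)$ exchanged. Write the dual equation as $\partial_t\tilde u=\tilde F[D^2\tilde u]+\tilde f(\tilde x,D\tilde u)-\frac{n\pi}{2}$ with $\tilde f(\tilde x,\tilde p):=f(\tilde p,\tilde x)$. Since $f\in\mathscr A$ is concave in $x$, $\tilde f$ is concave in the gradient variable $\tilde p$. Because the dual equation carries $+\tilde f$, differentiating twice in a direction $\xi$ gives
\[
\tilde L\tilde u_{\xi\xi}+\tilde F^{ij,rs}\tilde u_{ij\xi}\tilde u_{rs\xi}=-\tilde f_{\xi\xi}-2\tilde f_{\xi\tilde p_i}\tilde u_{i\xi}-\tilde f_{\tilde p_i\tilde p_j}\tilde u_{i\xi}\tilde u_{j\xi}.
\]
By \eqref{concavemu} the second-derivative term on the left is $\le 0$, and by concavity of $\tilde f$ in $\tilde p$ the last term on the right is $\ge0$. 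Hence
\[
\tilde L\tilde u_{\xi\xi}\ge -|D_{xx}\tilde f|-2|D_{xp}f|\sup|D^2\tilde u|,
\]
which is exactly the structure used in Lemma \ref{C2estimate}. Here $D_{xx}\tilde f=D_{pp}f$ is bounded by $\|f\|_{C^2}$, and the mixed derivatives of $\tilde f$ are the transposed mixed derivatives of $f$. The Remark after the definition of the dual solution gives \eqref{cond1}, \eqref{cond2} for $\tilde F$ with the same $\Lambda_1$. Moreover $\tilde L h\le-\theta\sum\tilde F^{ii}+f_{x_i}h_i$, and the drift term is absorbed thanks to \eqref{dfcond}, just as \eqref{lcond} absorbed it before.

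\emph{Interior step.} Take the barrier $\sup_{\partial_p}\tilde u_{\xi\xi}+\frac{2}{\Lambda_1\theta}(\cdots)h$ and apply the maximum principle. The smallness \eqref{Dxp1}, which is the counterpart of \eqref{Dxp} with $\tilde h,\tilde\theta$ replaced by $h,\theta$, lets us absorb $\sup|D^2\tilde u|$. This yields
\[
\sup|D^2\tilde u|\le 2\max_{\partial\tilde\Omega\times[0,T]}|D^2\tilde u|+2\max|D^2\tilde u_0|+C.
\]
Note that $D^2\tilde u_0=(D^2u_0)^{-1}$ is bounded in terms of the uniform convexity of $u_0$, which is controlled by $\|u_0\|_{C^2}$ together with $\max F[D^2u_0]$ via Lemma \ref{lm3.2}.

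\emph{Boundary step.} I decompose $\xi$ into tangential and $\tilde\beta$ components as in \eqref{eq3.21}. This uses the obliqueness bound for $\langle\tilde\beta,\tilde\nu\rangle=\langle\beta,\nu\rangle$, which was already established symmetrically in Proposition \ref{oblique estimate}. The $\tilde u_{\tilde\beta\tilde\beta}$ bound follows from the barrier $-\tilde h(D\tilde u)+C_0h+A|\tilde x-\tilde x_0|^2$ as in Lemma \ref{ubetabeta}, using the dual analogue of Lemma \ref{Lphistimate}. The tangential bound follows as in Lemma \ref{utangent}, with the final step $\tilde\theta\mathcal M^2\le C\mathcal M$ coming from concavity of $\tilde h$.

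The main obstacle is checking that every drift and inhomogeneous term has the right sign and is absorbable after the swap of variables. In particular, the dual operator has $+f_{\tilde p_i}$ in $\tilde L$, and the relevant first derivatives of $\tilde f$ are $D_xf$, not $D_pf$. So \eqref{dfcond} must now play the role that \eqref{lcond} played, and I would verify carefully that \eqref{dfcond} is exactly what controls the terms $C_0 f_{x_i}h_i$ in each barrier computation. Once that is checked, combining the interior and boundary bounds gives \eqref{tildeD2upartialbaromega}.
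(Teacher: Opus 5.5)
Your proposal is correct and follows the paper, whose proof consists exactly of rerunning Lemmas~\ref{C2estimate}--\ref{C2globalbound} for the dual problem \eqref{tilde u eq1} with $\tilde L=\tilde F^{ij}\partial_{ij}+f_{\tilde p_i}\partial_i-\partial_t$. Your bookkeeping is what that rerun needs: concavity of $f$ in $x$ makes $-\tilde f_{\tilde p_i\tilde p_j}\tilde u_{i\xi}\tilde u_{j\xi}\ge0$, \eqref{dfcond} takes over the role of \eqref{lcond}, and \eqref{Dxp1} that of \eqref{Dxp}.

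One correction: $\max|D^2\tilde u_0|=\max 1/\lambda_{\min}(D^2u_0)$ is not controlled by $\|u_0\|_{C^2}$ and $\max F[D^2u_0]$ via Lemma~\ref{lm3.2}, because that lemma only bounds the smallest eigenvalue from above. The constant must therefore also depend on the uniform convexity modulus of $u_0$, which is finite since $u_0$ is uniformly convex; the paper's statement silently omits this dependence.
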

Finally, combined with Lemma \ref{C2globalbound} and Lemma \ref{tildeC2globalbound}, we conclude:
\begin{lem}\label{globalC2}
	Let $f\in \mathscr{A}_{\delta}$ and $|D_xf|, |D_pf|, |D_{xp}f|$ satisfy \eqref{dfcond}, \eqref{lcond}, \eqref{Dxp} and \eqref{Dxp1}. If $u$ is a strictly convex solution of \eqref{evoproblem}, then there exists a positive constant $C_{28}$ depending only on $n, \text{diam}(\Omega),  \text{diam}(\tilde{\Omega}), |\Omega|, |\tilde{\Omega}|,\theta,\tilde{\theta},\Theta,\tilde{\Theta},\|u_0\|_{C^2(\overline{\Omega})}$,\\
	$\max_{\overline{\Omega}}F[D^{2}u_{0}]$ and $\delta$, such that
	\begin{equation}\label{globalC2estimate}
		\frac1{C_{28}}I_n\leq D^2u(x,t)\leq C_{28}I_n,\quad(x,t)\in\overline{\Omega_{T}},
	\end{equation}
	where $I_n$ is the $n\times n$ identity matrix.
\end{lem}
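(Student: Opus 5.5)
The upper bound in \eqref{globalC2estimate} is exactly Lemma \ref{C2globalbound}: under \eqref{dfcond}, \eqref{lcond}, \eqref{Dxp} it gives $\max_{\overline{\Omega_T}}|D^2u|\leq C_{26}$. Since $u$ is strictly convex in $x$, all eigenvalues of $D^2u$ are positive, so $D^2u\leq C_{26}I_n$ on $\overline{\Omega_T}$. The real content is therefore the positive lower bound, which I will obtain from the dual problem \eqref{tilde u eq1}.

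For the lower bound I use the Legendre transform $\tilde u(\cdot,t)$, which is strictly convex on $\tilde\Omega$ because $D^2\tilde u=(D^2u)^{-1}>0$. It solves \eqref{tilde u eq1}, an equation of the same type. The operator $\tilde F$ is concave and increasing by \eqref{posdefmu}, \eqref{concavemu}, and by the Remark after the definition of dual solution it satisfies \eqref{cond1}, \eqref{cond2}. The lower-order term is $f(D\tilde u,\tilde x)$, in which the roles of $x$ and $p$ are swapped. It is concave in its first argument and convex in its second, and since it enters with a plus sign this gives exactly the sign structure used in Lemma \ref{C2estimate}. The boundary condition is $\tilde h(D\tilde u)=0$, with $\tilde h$ uniformly concave. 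The smallness hypotheses also transfer: \eqref{dfcond} plays the role of \eqref{lcond}, \eqref{lcond} plays the role of \eqref{dfcond}, and \eqref{Dxp1} plays the role of \eqref{Dxp}, with $h,\theta$ replacing $\tilde h,\tilde\theta$. Hence Lemma \ref{tildeC2globalbound} applies and gives $\max|D^2\tilde u|\leq C_{27}$. The constant depends on $\|\tilde u_0\|_{C^2}$, which is controlled by $\|u_0\|_{C^2}$ together with the positive lower bound of $D^2u_0$; I absorb this into the stated dependence on $u_0$.

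To conclude, at any $(x,t)\in\overline{\Omega_T}$ set $\tilde x=Du(x,t)\in\overline{\tilde\Omega}$. Then $(D^2u(x,t))^{-1}=D^2\tilde u(\tilde x,t)\leq C_{27}I_n$, so every eigenvalue of $D^2u$ is at least $1/C_{27}$. Taking $C_{28}=\max\{C_{26},C_{27}\}$ gives \eqref{globalC2estimate}.

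The main obstacle is checking that the dual problem really meets the hypotheses of the earlier lemmas uniformly in $t$. Two points need care. First, the uniform obliqueness for $\tilde\beta$ follows from Proposition \ref{oblique estimate}, since $\langle\tilde\beta,\tilde\nu\rangle=\langle\beta,\nu\rangle$. Second, the $\dot{\tilde u}$ bounds follow from $\tilde u_t=-u_t$ and Lemma \ref{utestimate}, and these are what give $\tilde F$ the structure constant $\Lambda_1$. Once these are in place, the remaining steps are direct applications of the earlier lemmas.
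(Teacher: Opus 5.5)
Your proposal is correct and follows essentially the paper's argument: the upper bound is Lemma \ref{C2globalbound}, and the lower bound comes from applying Lemma \ref{tildeC2globalbound} to the Legendre dual and inverting $D^2\tilde u=(D^2u)^{-1}$. Your remark about the constant is also right. The estimate must already hold at $t=0$, so $C_{28}$ necessarily depends on a positive lower bound for $D^2u_0$, and the paper's dependence list (through $\|u_0\|_{C^2(\overline{\Omega})}$, $\max_{\overline{\Omega}}F[D^2u_0]$ and $\delta$) silently omits this quantity.
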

Finally, we summarize the proof of the \( C^2 \) estimate. The convexity condition of \( f \) in \( Du \) and the smallness of \( |D_{xp}^2 f| \) are used to reduce the positive upper bound of \( D^2 u \) to the boundedness on the boundary and to estimate the pure tangential second derivatives of the solution on the boundary (see Lemmas \ref{C2estimate} and \ref{utangent}). Combining this with the Legendre transformation of \( u \), the concavity condition of \( f \) in \( x \), and the smallness of \( |D_{xp}^2 f| \), we further reduce the positive lower bound of \( D^2 u \) to the boundedness on the boundary and estimate the pure tangential second derivatives of the dual solution on the boundary.

\section{Longtime existence and convergence}\label{section5}

Now, we give
a proof of Theorem \ref{main-thm}. This a standard result by our $C^2$ estimates and uniformly
oblique estimates, but for convenience we include here a proof.

Part 1: The long time existence.

Using Theorem 14.22 in Lieberman \cite{lieberman} and Proposition \ref{oblique estimate}, we can show that the solutions of uniformly oblique derivative problem \eqref{evoproblem} have global $C^{2+\alpha_0,1+\frac{\alpha_0}2}$ estimates, for some $0<\alpha_0<\alpha$.
Now, let $u_{0}$ be a $C^{2+\alpha}$ strictly convex function as in the conditions of Theorem \ref{main-thm}.  Combining Proposition \ref{shorttime} with Lemma \ref{globalC2} and using Theorem 14.23 in \cite{lieberman}, hence a standard argument using the Arzelà--Ascoli theorem (see \cite{Wang2023} for more details) gives the existence of a $C^{2+\alpha_0,1+\frac{\alpha_0}2}$ solutionu of \eqref{evoproblem} for all times $t>0$ and $0<\alpha_0<\alpha$.

Part 2: The convergence.

The standard Schauder estimates \cite{lieberman,Ladyzhenskaya} imply uniform bounds of the form
\begin{align*}
	&\|D^3u(\cdot,t)\|_{C(\overline{\Omega})}+\|D^4u(\cdot,t)\|_{C(\overline{\Omega})}\\
	&+\sup_{x_1,x_2\in \overline{\Omega},(x_1,t_1)\neq (x_2,t_2)}\frac{|D^4u(x_1,t_1)-D^4u(x_2,t_2)|}{|x_1-x_2|^{\alpha_0}+|t_1-t_2|^{\frac{\alpha_0}2}}\leq\tilde{C}_1,
\end{align*}
where $\tilde{C}_{1}$ are constants depending on $n, \text{diam}(\Omega),  \text{diam}(\tilde{\Omega}), |\Omega|, |\tilde{\Omega}|,\theta,\tilde{\theta},\Theta,\tilde{\Theta}$,\\
$ \|u_0\|_{C^{2+\alpha}(\overline{\Omega})},$
$ \max_{\overline{\Omega}}F[D^{2}u_{0}],$ and $\delta$.

Now following Section 6.2 in \cite{Schnürer2002}  and Theorem 1.1 in \cite{Huang2021}, we can obtain a translating solution of the same regularity as $u$, i.e. a function $u^\infty(x,t)=\tilde{u}^\infty+C_\infty\cdot t$ for some constant $c_\infty$ that satisfes the problem \eqref{EQP}-\eqref{BC}, such that $\|u-u^\infty\|_{C^{4,\gamma}(\overline{\Omega})}\to0$ as $t\to\infty$ for any $0<\gamma<\alpha_0$. Thus 
$$c_\infty=F[D^2\tilde{u}^\infty]-f(x,D\tilde{u}^\infty)\quad x\in \Omega, \quad and \quad  D\tilde{u}^\infty(\Omega)=\tilde{\Omega}.$$
Combining the Theorem 1.1 in \cite{lieberman1986} and bootstrap arguments, we finish the proof of Theorem \ref{main-thm}.
\begin{cor}
	By  Remark \ref{Adelta}, we see that Theorem \ref{prethm}
	is a direct consequence of Theorem \ref{main-thm}.
\end{cor}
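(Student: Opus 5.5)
The corollary asks that Theorem \ref{prethm} be deduced from Theorem \ref{main-thm}. The plan is to specialize Theorem \ref{main-thm} to $f(x,p)=\iota\cdot p+\kappa\cdot x$ (the constant $c$ in Remark \ref{Adelta} is absorbed into $c_\infty$). Then we check that every smallness hypothesis reduces to $|\iota|,|\kappa|\le\epsilon_0$, and read off existence, regularity and uniqueness.

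\emph{Membership and smallness.} By Remark \ref{Adelta}, $f$ is affine in both $x$ and $p$, hence concave in $x$ and convex in $p$, and smooth; so $f\in\mathscr{A}$. Its derivatives are $D_xf=\kappa$, $D_pf=\iota$, $D_{xp}f=0$ and $D_{xx}f=0$. Condition \eqref{Dxp} and \eqref{Dxp1} therefore hold trivially. Moreover, $\max|f(x,p)-f(x,q)|\le|\iota|\,\mathrm{diam}(\tilde\Omega)$, so $f\in\mathscr{A}_\delta$ once $|\iota|$ is small.

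\emph{Choice of initial datum.} Since $f$ is now fixed, we need an explicit admissible $u_0$ that depends only on $\Omega,\tilde\Omega$. We take $u_0$ to be the Brendle--Warren solution \cite{Brendle2008ABV} of \eqref{Ftauc}. It is smooth and uniformly convex, satisfies $Du_0(\Omega)=\tilde\Omega$, and $F[D^2u_0]\equiv c_0\in(0,n\pi/2)$. Then $\delta$ can be chosen as, for example, $\frac12\min\{c_0,\frac{n\pi}{2}-c_0\}$. Consequently $\Lambda_1$ in Lemma \ref{lm3.2} depends only on $\Omega,\tilde\Omega$. The conditions \eqref{lcond} and \eqref{dfcond} are then bounds of the form $|\iota|,|\kappa|\le c(\Omega,\tilde\Omega)$. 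Taking $\epsilon_0$ to be the minimum of these and of $\delta/\mathrm{diam}(\tilde\Omega)$ makes all hypotheses of Theorem \ref{main-thm} hold. This step is the main point needing care: the constants in the smallness conditions must not depend on $\iota,\kappa$ themselves, and that is guaranteed because $u_0$ is chosen independently of them.

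\emph{Conclusion.} Theorem \ref{main-thm} gives $\tilde u^\infty\in C^\infty(\overline\Omega)$, uniformly convex by Lemma \ref{globalC2}, solving \eqref{trans solu}. This is \eqref{EQE}--\eqref{DuO} with $c=c_\infty$. Uniqueness of $u$ up to additive constants is part of Theorem \ref{main-thm}.

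For uniqueness of $c$, suppose $(u_1,c_1)$ and $(u_2,c_2)$ both solve the problem. Then $u_i+c_it$ are translating solutions of \eqref{EQP}--\eqref{BC}. If $c_1>c_2$, consider $w=u_1-u_2$. It satisfies a linear elliptic equation $a^{ij}w_{ij}-b^iw_i=c_1-c_2>0$, with the oblique boundary condition obtained from $h(Du_1)=h(Du_2)=0$, namely $\int_0^1h_{p_k}(Du_2+s Dw)\,ds\; w_k=0$. The maximum of $w$ cannot occur in the interior, by the maximum principle. It also cannot occur on the boundary, by the Hopf lemma together with obliqueness (Lemma \ref{Urbas}). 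This contradiction gives $c_1=c_2$.
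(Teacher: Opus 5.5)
Your existence argument follows the paper's route: you specialize Theorem \ref{main-thm} to the affine $f$ of Remark \ref{Adelta}. It is in fact more careful than the paper's one-line proof. Fixing $u_0$ in terms of $\Omega,\tilde\Omega$ alone (the Brendle--Warren solution) is what makes $\Lambda_1$, the thresholds in \eqref{lcond} and \eqref{dfcond}, and hence $\epsilon_0$ depend only on $\Omega,\tilde\Omega$. The paper never says where an admissible $u_0$ comes from. That part is fine.

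The gap is in your extra argument for uniqueness of $c$, at the boundary step.

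\textbf{Why obliqueness is not available.} Your vector $\bar\beta=\int_0^1 h_p(Du_2+s\,Dw)\,ds$ satisfies $\langle\bar\beta,Dw\rangle=h(Du_1)-h(Du_2)$ identically. So the ``boundary condition'' $\langle\bar\beta,Dw\rangle=0$ carries no information beyond $h(Du_1)=h(Du_2)=0$. At a boundary maximum $x_0$ of $w$, the tangential gradient vanishes, and Hopf gives $Dw(x_0)=\partial_\nu w(x_0)\,\nu$ with $\partial_\nu w(x_0)<0$. The boundary condition then forces $\langle\bar\beta,\nu\rangle=0$ at $x_0$. Thus ``$\bar\beta$ is oblique at $x_0$'' is equivalent to the conclusion you want. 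Lemma \ref{Urbas} does not supply it: that lemma concerns $h_p(Du)$ for a single convex $u$, not an average of $h_p$ along the chord from $Du_2(x_0)$ to $Du_1(x_0)$.

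\textbf{The fix.} The missing ingredient is the concavity of $h$, i.e.\ the convexity of $\tilde\Omega$. The chord lies in $\overline{\tilde\Omega}$, where $D^2h\le-\theta I$. Write $\beta_2:=h_p(Du_2(x_0))$. Then
\[
0=h(Du_1(x_0))-h(Du_2(x_0))\le\langle\beta_2,Dw(x_0)\rangle=\partial_\nu w(x_0)\,\langle\beta_2,\nu\rangle<0,
\]
since $\langle\beta_2,\nu\rangle>0$ by Lemma \ref{Urbas} applied to $u_2$ alone. This is the contradiction you need.

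The same argument also gives uniqueness of $u$ up to constants when $c_1=c_2$, with the strong maximum principle replacing your strict inequality. It is worth including, since the paper states that clause of Theorem \ref{main-thm} without proof.
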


\end{document}